\newcommand{\sd}[4]{$\underset{\hskip -5pt\scriptscriptstyle #3,#2}{#1^{#4}}$}
\newcommand{\sdu}[4]{$\underset{\hskip -5pt\scriptscriptstyle #3,#2}{\underline{#1}^{#4}}$}
\newcommand{\hs}{\hskip 5pt}
\newcommand{\Rec}{\mathsf{Rec}}
\newcommand{\MinRec}{\mathsf{Rec}^{\mathsf{min}}}
\newcommand{\EW}{{\mathsf{EWtab}}}
\newcommand{\DEW}{\mathsf{DecEWtab}}
\newcommand{\rowlabs}{\mathsf{rows}}
\newcommand{\collabs}{\mathsf{cols}}
\newcommand{\ttc}{\phi_{TC}}
\newcommand{\ctt}{\phi_{CT}}
\newcommand{\tts}{\phi_{TS}}
\newcommand{\CanonTop}{\mathsf{CanonTop}}
\newcommand{\minrec}{\mathsf{minrec}}
\newcommand{\dew}{\psi}
\newcommand{\RunDec}{\mathsf{RunDec}}
\newcommand{\RDEW}{\mathsf{Rec}_{\mathsf{EW}}}
\newcommand{\cls}[1]{\mathcal{S}_{#1}}
\newcommand{\UU}{\mathcal{U}}
\newcommand{\VV}{\mathcal{V}}
\newcommand{\dec}{\mathsf{dec}}
\newcommand{\inc}{\mathsf{inc}}
\newcommand{\topbef}{\prec_{T}}
\newcommand{\U}[2]{U_{#1}^{(#2)}}
\newcommand{\V}[2]{V_{#1}^{(#2)}}
\newcommand{\D}[2]{D_{#1}^{(#2)}}
\newcommand{\A}[2]{A_{#1}^{(#2)}}
\newcommand{\mycomp}[1]{\overline{#1}}
\newcommand{\rightangle}{cornersupport}
 \newtheorem{theorem}{Theorem}[section]
 \newtheorem{corollary}[theorem]{Corollary}
 \newtheorem{prop}[theorem]{Proposition}
 \newtheorem{lemma}[theorem]{Lemma}
 \theoremstyle{definition}
 \newtheorem{example}[theorem]{Example}
 \newtheorem{definition}[theorem]{Definition}
 \newtheorem{remark}[theorem]{Remark}
 \theoremstyle{remark}
 \newtheorem{algorithm}[theorem]{Algorithm}
\definecolor{mygreen}{RGB}{80,160,80}
\newcommand{\EWdiagram}[3]{
%#1 = young tableau
%#2 = row labels (inc order)
%#3 = column labels (dec order)
  \begin{tikzpicture}
\def\step{0.43}
\draw (0,0) node [anchor = north west]  {\young(#1)};
\foreach[count=\y] \lab in {#2}
	\draw (0.25,-\y*\step+0.05) node [anchor=east]{\tiny{\lab}};
\foreach[count=\x] \lab in {#3}
	\draw (\x*\step-0.05,0.2) node [anchor=north]{\tiny{\lab}};
      \end{tikzpicture}
}
\newcommand{\EWdiagramlab}[4]{
%#1 = young tableau
%#2 = row labels (inc order)
%#3 = column labels (dec order)
%#4 = label
  \begin{tikzpicture}
\def\step{0.43}
\draw (0,0) node [anchor = north west]  {\young(#1)};
\foreach[count=\y] \lab in {#2}
	\draw (0.25,-\y*\step+0.05) node [anchor=east]{\tiny{\lab}};
\foreach[count=\x] \lab in {#3}
	\draw (\x*\step-0.05,0.2) node [anchor=north]{\tiny{\lab}};
\draw (-.25,-1) node [anchor = east]{#4};
\end{tikzpicture}
}
\newcommand{\decoEWdiagram}[6]{
%#1 = young tableau
%#2 = row labels (inc order)
%#3 = column labels (dec order)
%#4 = label
%#5 = row decorations (top to bottom) inputted as row_length/label
%#6 = column decorations (left to right) inputted as column_length/label
  \begin{tikzpicture}
\def\step{0.43}
\draw (0,0) node [anchor = north west]  {\young(#1)};
\foreach[count=\y] \lab in {#2}
	\draw (0.25,-\y*\step+0.05) node [anchor=east]{\tiny{\lab}};
\foreach[count=\x] \lab in {#3}
	\draw (\x*\step-0.05,0.2) node [anchor=north]{\tiny{\lab}};
\foreach[count=\yy from 2] \a/\b in {#5}
	\draw (\a*\step+.22,-\yy*\step+0.275) node [anchor=north]{\tiny{\b}};
\foreach[count=\xx] \a/\b in {#6}
	\draw (\xx*\step-0.05,-\a*\step-.03) node [anchor=north]{\tiny{\b}};
\draw (-.25,-1) node [anchor = east]{#4};
\end{tikzpicture}
}
\title[The Abelian sandpile model on Ferrers graphs]{The Abelian sandpile model on Ferrers graphs --- A classification of recurrent configurations}
\author[M. Dukes]{Mark Dukes}
\address{UCD School of Mathematics and Statistics, University College Dublin, Dublin 4, Ireland} \email{mark.dukes@ccc.oxon.org}
\author[T. Selig]{Thomas Selig}
\address{Mathematics Division, Science Institute, University of Iceland, Dunhaga 5, 107 Reykjav\'ik, Iceland.} \email{selig@hi.is}
\author[J.P. Smith]{Jason P. Smith}
\address{Department of Mathematics, University of Aberdeen, Aberdeen, AB24 3FX, UK}
\email{jason.smith@abdn.ac.uk }
\author[E. Steingr\'imsson]{Einar Steingr\'imsson}
\address{Department of Computer and Information Sciences, University of Strathclyde, Glasgow G1 1XH, U.K.} 
\email{einar@alum.mit.edu}
\thanks{This work was supported by grants EP/M015874/1 and EP/M027147/1 from The Engineering and Physical Sciences Research Council.}
\date{\today}
\begin{document}

\begin{abstract}
We classify all recurrent configurations of the Abelian sandpile model (ASM) on Ferrers graphs. 
The classification is in terms of decorations of EW-tableaux, which undecorated are in bijection with the minimal recurrent configurations. 
We introduce decorated permutations, extending to decorated EW-tableaux a bijection between such tableaux and permutations, giving a direct bijection between the decorated permutations and all recurrent configurations of the ASM.    We also describe a bijection between the decorated permutations and the intransitive trees of Postnikov, the 
breadth-first search of which corresponds to a canonical toppling of the corresponding configurations. 
\end{abstract}

\maketitle

\section{Introduction}\label{sec:intro}

In this paper we classify all recurrent configurations of the Abelian sandpile model (ASM)
on the Ferrers graphs defined in \cite{evw}.  A Ferrers graph is a bipartite graph whose vertices correspond to rows and columns in a Ferrers diagram $F$, with an edge between vertices $r$ and $c$ if $F$ has a cell in row $r$ and column $c$ (see Example~\ref{ex:FerLabel,FerGr}, and Section~\ref{sec:defs} for other definitions).  

This work builds on the previous paper by three of the current authors~\cite{sss}, where a bijective correspondence was established between the minimal recurrent configurations of the ASM on Ferrers graphs 
and the so-called EW-tableaux, which are certain 0/1-fillings  of the corresponding  Ferrers diagrams. In \cite{sss} a bijection was also provided between the EW-tableaux and permutations whose excedances are determined by the shape of the corresponding Ferrers diagram.  

In order to give a general classification of the recurrent configurations, we introduce 
the notion of {\emph{decorated}} EW-tableaux. These decorated tableaux are EW-tableaux in which every row and column has a non-negative integer associated with it.  Those sequences of decorations that correspond to recurrent configurations are classified in terms of properties of the EW-tableaux that they decorate.   Moreover, we extend the bijection in \cite{sss} from EW-tableaux to permutations to produce decorated permutations, described in terms of simple statistics on the permutations, resulting in a direct bijective correspondence between these decorated permutations and all recurrent configurations on the corresponding Ferrers graphs.  

We also describe a bijection between the decorated permutations and the \emph{intransitive trees} defined by Postnikov~\cite{postnikov-intransitive-trees}. These are labeled trees where each vertex has a greater label than all its neighbors or else a smaller label than all of them.  Our bijection has the property that the canonical toppling of vertices of the corresponding graph corresponds to the breadth-first search of the tree.
 
The paper is organized as follows. In Section~\ref{sec:defs} we recall some necessary definitions related to the objects of this paper, and give some background.  In Section~\ref{sec:EWT,perm,minrec} we extend to all Ferrers graphs the notion of canonical topplings, first introduced in the case of complete bipartite graphs by Dukes and Le Borgne~\cite{dlb}. We also recall the bijection from EW-tableaux to minimal recurrent configurations of the ASM on Ferrers graphs and give an explicit description of the inverse.  This inverse relies on the notion of canonical toppling, which is closely linked to the permutation associated to a EW-tableau in \cite{sss}.  In Section~\ref{sec:recstates_decEWT} we introduce the notion of \emph{supplementary tableaux} and bring together observations and results from previous sections to culminate in the complete classification of all recurrent configurations on Ferrers graphs, in Theorem~\ref{classif}.  In Section~\ref{sec:decperms} we consider the decorated permutations that correspond (via extension of results presented in~\cite{sss}) to decorated EW-tableaux, show how the stabilization process of the ASM translates to these decorated permutations, and exhibit a bijection between the decorated permutations and the intransitive trees defined by Postnikov~\cite{postnikov-intransitive-trees}.  The breadth-first search of such a tree corresponds to the canonical toppling of the corresponding configuration on the corresponding Ferrers graph.

%%%%%%%%%%%%%%%%%%%%%%%%%%%%%%%%%%%%%%%%%%%%%%%%%%%%%%%%%%%%%%%%%%%%%%%%%%%%%%%%%%%%%%%%%%%%%%%%%
%%%%%%%%%%%%%%%%%%%%%%%% SECTION %%%%%%%%%%%%%%%%%%%%%%%%%%%%%%%%%%%%%%%%%%%%%%%%%%%%%%%%%%%%%%%%
%%%%%%%%%%%%%%%%%%%%%%%%%%%%%%%%%%%%%%%%%%%%%%%%%%%%%%%%%%%%%%%%%%%%%%%%%%%%%%%%%%%%%%%%%%%%%%%%%

\section{Definitions and Background}\label{sec:defs}

\subsection{Ferrers graphs}\label{sec:FerGr}

Let $F$ be a Ferrers diagram with rows and columns labeled $0,\ldots,n$ from top right to bottom left along the south-east border; see Example~\ref{ex:FerLabel,FerGr}.
The sets of row and column labels of $F$ are denoted by $\rowlabs(F)$ and $\collabs(F)$, respectively.
Let $G=G(F)$ be the graph corresponding to $F$ thus: If $i$ is a row label and $j$ a column label of $F$, then $(i,j)$ is an edge of $G$ if and only if $F$ has a cell in row $i$ and column $j$.
Note that it is a consequence of the definition that $(i,j) \in \rowlabs(F) \times \collabs(F)$ is an edge of $G(F)$ if and only if $i<j$. 
We identify the label of a row or column of $F$ with its corresponding vertex in $G$, that is, we consider $G$ to be a graph with vertex set $\{0,\ldots,n\}$.

\begin{example}\label{ex:FerLabel,FerGr}
Let $F$ be the Ferrers diagram in Figure~\ref{fig:FerLabel,FerGr} below.
Label the lower path from top right to bottom left with the labels $0,1,\ldots ,8$. 
Then copy each column label to the top of its column and each row label to the left of its row, as shown on the right.

The row labels are $\rowlabs(F)=\{0,3,4,6\}$ and $\collabs(F)=\{1,2,5,7,8\}$.
We will always consider the row and column labels to be along the north-west boundary, as in
the diagram on the right, as we introduce a separate labeling of the 
south-east boundary in Section~\ref{sec:recstates_decEWT}.
Then $G(F)$ is the graph with vertex set $\{0,\ldots,8\}$ and edge set $E(G)$, where $(i,j) \in E(G)$ if and only if $F$ has a cell in row labeled $i$ and column labeled $j$:

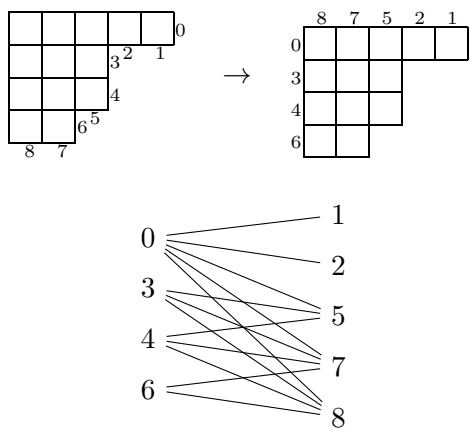
\begin{figure}[ht]
 \centering
 \subfigure{
  \begin{tikzpicture}
\def\xx{0.435}
\def\bx{0}\def\by{0}
\def\initx{0.25}\def\inity{0.37}
\def\jnitx{0.31}\def\jnity{0.47}
\def\stepx{0.43}\def\stepy{0.43}
\draw (\bx,\by) node [anchor = north east]  {\young(~~~~~,~~~,~~~,~~)};
\foreach \x/\y/\lab in {0/0/0,2/1/3,2/2/4,3/3/6}  % vertical labels
	\draw (\bx-\initx-\x*\stepx,\by-\inity-\y*\stepy) node [anchor=west]{\tiny{\lab}};
\foreach \x/\y/\lab in {0/0/1,1/0/2,2/2/5,3/3/7,4/3/8}
	\draw (\bx-\jnitx-\x*\stepx,\by-\jnity-\y*\stepy) node [anchor=north]{\tiny{\lab}}; % horiztonal labels
\draw (0.7,-1) node {$\to$};
  \end{tikzpicture}
 \EWdiagram{~~~~~,~~~,~~~,~~}{0,3,4,6}{8,7,5,2,1}
 } \\
 \subfigure{
  \begin{tikzpicture}[fsnode/.style={}, ssnode/.style={}, every fit/.style={inner sep=5pt,text width=0cm}, -,shorten >= 3pt,shorten <= 3pt ]
    \begin{scope}[start chain=going below,node distance=4mm]
\foreach \i in {0,3,4,6}
  \node[fsnode,on chain,label=center:${\i}$] (f\i){};
\end{scope}
\begin{scope}[xshift=2.5cm,yshift=3mm,start chain=going below,node distance=4mm]
\foreach \i/\xcoord/\ycoord in {1,2,5,7,8}
  \node[ssnode,on chain,label=center:${\i}$] (s\i){};
\draw (f0) -- (s1); \draw (f0) -- (s2); \draw (f0) -- (s5); \draw (f0) -- (s7);
\draw (f0) -- (s8); \draw (f3) -- (s5); \draw (f3) -- (s7); \draw (f3) -- (s8);
\draw (f4) -- (s5); \draw (f4) -- (s7); \draw (f4) -- (s8); \draw (f6) -- (s7); \draw (f6) -- (s8);
	\end{scope}
  \end{tikzpicture}
 }
\caption{An example of a Ferrers diagram with its row and column labels, and the corresponding Ferrers graph.\label{fig:FerLabel,FerGr}}
\end{figure}

\end{example}

\subsection{EW-tableaux}\label{sec:EWT}

EW-tableaux were first described in \cite{evw}, in a slightly more general form, but the following version was introduced in \cite{sss}.

\begin{definition}\label{def:EWT}
A EW-tableau $T$ is a 0/1-filling of a Ferrers diagram with the following properties:
\begin{enumerate}
\item[1.] The top row of $T$ has a 1 in every cell.
\item[2.] Every other row has at least one cell containing a 0.
\item[3.] No four cells of $T$ that form the corners of a rectangle have 0s in two diagonally opposite corners and 1s in the other two.
\end{enumerate}
The \emph{size} of a EW-tableau is its semiperimeter less one, that is, one less than the sum of its numbers of rows and columns. Let $\EW(F)$ be the set of EW-tableaux on the diagram $F$.
\end{definition}
Given a EW-tableau $T$, we let $T_{ij}$ denote the entry in row $i$ and column~$j$. If  $T$ does not have a cell in row $i$ and column $j$ we write $T_{ij}=\varepsilon$.

EW-tableaux were introduced by Ehrenborg and van Willigenburg in \cite{evw}, with the difference that the all-1s row was arbitrary, in contrast to the above definition fixing it to be the top row. In particular, the authors showed that EW-tableaux are equinumerous with acyclic orientations, with a unique sink, of the corresponding Ferrers graphs, and with permutations according to their sets of excedances, that is, places $i$ such that $a_i>i$ in a permutation $a_1a_2\ldots a_n$ of the letters $1,2,\ldots,n$. 
In \cite{sss}, these tableaux were studied further, a bijection to permutations presented, and various enumerative results established. It was also shown that the set of EW-tableaux on a Ferrers diagram $F$ is in one-to-one correspondence with the set of minimal recurrent configurations of the Abelian sandpile model on the Ferrers graph $G(F)$ (see Section~\ref{sec:map_ctt} for more details on this model). 

\begin{example}\label{firsttabex}
Consider the EW-tableau $T$ below. The column labels are $\collabs(T) = \{1,2,5,7,8\}$ and 
$\rowlabs(T) = \{0,3,4,6\}$.
\begin{center}
\EWdiagram{11111,010,011,01}{0,3,4,6}{8,7,5,2,1}
\end{center}
Some of its entries are $T_{37}=1$, $T_{68}=0$, and $T_{62}=\varepsilon$.
\end{example}

\begin{example}\label{exampleone}
Let $F$ be the Ferrers diagram of shape (3,2,1). 
The row labels, read from top to bottom, are $0,2,4$, and the column labels, read from left to right, are $5,3,1$. 
There are three EW-tableaux of this shape:
$$\EW(F)=\left\{  
  {\scriptsize{\young(111,00,0)}}, {\scriptsize{\young(111,01,0)}}, {\scriptsize{\young(111,10,0)}} \right\}.$$
\end{example}

\subsection{The sandpile model on Ferrers graphs}\label{sec:ASM}
The Abelian sandpile model (ASM) is defined on any undirected graph $G$ with a designated vertex~$s$ called the \emph{sink}.  A \emph{configuration} on $G$ is an assignment of non-negative integers to the non-sink vertices of $G$:
$$c:V(G)\backslash\{s\} \mapsto \mathbb{N}.$$
The number $c(v)$ is sometimes referred to as the number of \emph{grains} at vertex~$v$, or as the \emph{height} of $v$.  Our Ferrers graphs have vertex set $\{0,1,\ldots,n\}$, the labels of the rows and columns of the corresponding Ferrers diagram, and the sink vertex is always $0$, corresponding to the top row. We will write a configuration as $c=(c_1,\ldots,c_n)$.  Given a configuration $c$, a vertex $v$ is said to be {\emph{stable}} if the number of grains at $v$ is strictly smaller than the degree of $v$. Otherwise $v$ is {\emph{unstable}}.  A configuration is {\emph{stable}} if all non-sink vertices are stable.

If a vertex is unstable then it may {\emph{topple}}, which means the vertex donates one grain to each of its neighbors.  Since the sink has no height associated with it, it absorbs grains and that is how grains may exit the system.  Given this, it is possible to show (see for instance \cite[Section 5.2]{Dhar}) that starting from any configuration $c$ and toppling unstable vertices, one eventually reaches a stable configuration $c'$.  Moreover, $c'$ does not depend on the order in which vertices are toppled in this sequence. We call $c'$ the \emph{stabilization} of $c$.

Starting from the empty configuration, one may indefinitely add any number of grains to any vertices and topple vertices should they become unstable.  Certain stable configurations will appear again and again, that is, they \emph{recur}, while other stable configurations will never appear again.  
More precisely, a configuration is recurrent if, and only if, it can be achieved by adding grains to the maximally stable configuration (the configuration in which every vertex is one shy of toppling) and stabilizing.
These {\emph{recurrent configurations}} are the ones that appear in the long term limit of the system and are consequently of most interest.  Let $\Rec(G)$ be the set of recurrent configurations on $G$, and let $\MinRec(G)$ be the \emph{minimally recurrent} configurations, that is, those whose sums of heights are minimal.  

In \cite[Section 6]{Dhar}, Dhar describes the so-called \emph{burning algorithm}, which establishes in linear time whether a given stable configuration is recurrent. We recall the result here.

\begin{prop}[\cite{Dhar}, Section 6.1]\label{pro:DharBurning}
Let $G$ be a graph with sink $s$, and $c$ a stable configuration on $G$. Then $c$ is recurrent if and only if there exists an ordering $v_0=s,v_1,\ldots,v_n$ of the vertices of $G$ such that, starting from $c$, for any $i \geq 1$, toppling the vertices $v_0,\ldots,v_{i-1}$ causes the vertex $v_i$ to become unstable. Moreover, if such a sequence exists, then toppling $v_0,\ldots,v_n$ returns the initial configuration $c$.
\end{prop}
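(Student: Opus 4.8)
The plan is to separate the two implications and the final ``moreover'' assertion, relying throughout on the abelian property: the number of times each vertex topples while stabilising a fixed starting configuration is independent of the order in which unstable vertices are fired. Write $\beta$ for the \emph{burning} vector, $\beta_v$ being the number of edges joining $v$ to the sink $s$; firing the sink once adds exactly $\beta$ to the non-sink heights. By the abelian property, the existence of an ordering $v_0=s,v_1,\dots,v_n$ as in the statement is equivalent to the assertion that, when one fires the sink from $c$ and then stabilises, \emph{every} non-sink vertex topples: the $v_i$ are simply a linear order compatible with the resulting cascade, and whether a vertex eventually topples does not depend on the firing order.

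The first lemma I would prove is that, starting from the \emph{stable} configuration $c$ and firing the sink once, no vertex topples more than once. This is a grain count: if $v$ were the first vertex to topple a second time, then at that moment each of its neighbours (the sink included) has fired at most once, so $v$ has received at most $\sum_{u\sim v}a_{uv}=d_v$ grains in all; since it began with $c_v\le d_v-1$ and lost $d_v$ at its first firing, its height is at most $c_v<d_v$, contradicting instability. Consequently, if every non-sink vertex topples, each does so exactly once, and then the net height change at every vertex $w$ is $-d_w+\sum_{u\sim w}a_{uw}=0$, i.e. we are evaluating the graph Laplacian on the all-ones vector (every vertex, sink included, fires once). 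Thus the final configuration is again $c$, which proves the ``moreover'' statement and shows that the burning order, when it exists, certifies $\mathrm{stab}(c+\beta)=c$.

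It then remains to identify ``the sink-firing cascade burns every vertex'' with recurrence. For the forward implication I would argue by contraposition: if some non-sink vertices fail to topple, let $U\neq\varnothing$ be the set of those that do not. Each $v\in U$ stays stable while receiving grains only from its toppled (hence non-$U$) neighbours, so $c_v+\sum_{u\sim v,\,u\notin U}a_{uv}<d_v$, that is $c_v<e(v,U)$, where $e(v,U)$ counts the edges from $v$ into $U$; thus $U$ is a \emph{forbidden sub-configuration}. The key step is to show that a configuration reachable from the maximal stable configuration $c_{\max}$, where $(c_{\max})_v=d_v-1$, cannot contain such a $U$: one checks directly, using connectedness of $G$, that $c_{\max}$ itself has no forbidden sub-configuration, and then argues that this property is preserved under adding a grain and stabilising; hence every recurrent configuration is free of forbidden sub-configurations and so burns completely.

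For the converse I would invoke the toppling-equivalence structure: since $\beta=\Delta\mathbf 1$ lies in the image of the reduced Laplacian $\Delta$, the configurations $c$ and $c+\beta$ are toppling-equivalent, so $\mathrm{stab}(c+\beta)$ is exactly the unique recurrent representative of the class of $c$; the hypothesis $\mathrm{stab}(c+\beta)=c$ then forces $c$ to equal that representative, hence to be recurrent. One can avoid this group-theoretic input and instead close the loop by cardinality: the forward direction embeds the recurrent configurations into the completely-burning ones, and both families are counted by the spanning trees of $G$ via the matrix-tree theorem, forcing equality. I expect the main obstacle to be the forward direction---specifically, verifying that freedom from forbidden sub-configurations is inherited through ``add a grain and stabilise,'' since a single toppling simultaneously removes grains from one vertex and feeds its neighbours, and one must check that no new forbidden set is thereby created. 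The no-double-toppling count and the Laplacian-kernel identity are routine by comparison.
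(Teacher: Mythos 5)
The first thing to note is that the paper does not prove this proposition at all: it is recalled verbatim as known background and attributed to Dhar (\cite{Dhar}, Section 6.1), so your attempt can only be judged on its own merits rather than against an in-paper argument. On those merits, roughly half of your proposal is sound. The ``moreover'' part (no vertex can topple twice starting from a stable configuration, hence each topples exactly once and the Laplacian identity returns $c$) is correct, and your forward direction (recurrent $\Rightarrow$ completely burning, via forbidden sub-configurations: $c_{\max}$ has none by connectedness, the property is preserved by adding grains and by toppling, and the set of unburnt vertices would be forbidden) is the standard argument and is completable exactly as you sketch it --- the preservation step you flag as the main obstacle does go through, since a set $U$ forbidden after toppling $w$ yields $U$ forbidden before the toppling when $w \notin U$, and $U \setminus \{w\}$ forbidden before it when $w \in U$.

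The genuine gap is the converse (burning $\Rightarrow$ recurrent), and neither of your routes closes it. Route A hinges on the claim that $\mathrm{stab}(c+\beta)$ ``is exactly the unique recurrent representative of the class of $c$,'' which is unjustified twice over: stabilization preserves the toppling-equivalence class but does \emph{not} map into recurrent configurations (stabilizing the zero configuration on a path, say, is not recurrent), and the theorem that each class contains exactly one recurrent configuration is itself standardly proved \emph{using} the burning criterion. Even granting that theorem, knowing that the class of $c$ contains a unique recurrent $r$ and that $\mathrm{stab}(c+\beta)=c$ does not identify $c$ with $r$; that identification is precisely the implication you are trying to prove. Route B inherits the same circularity: the count ``number of recurrent configurations equals number of spanning trees,'' which the paper quotes in Section 2, is in the literature a \emph{consequence} of Dhar's criterion (or of the unique-representative theorem), so it cannot serve as independent input. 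A non-circular repair of your sandwich exists but needs ingredients you do not supply: (i) every equivalence class contains \emph{at least} one recurrent configuration, because the reduced-Laplacian lattice contains a strictly positive vector, so every class has a representative $\geq c_{\max}$ whose stabilization is recurrent by definition; (ii) completely-burning configurations inject into spanning trees via the Majumdar--Dhar burning bijection (not via the matrix-tree theorem); and (iii) the number of classes equals the number of spanning trees (matrix-tree). Combined with your forward direction these inequalities force the sets of recurrent and completely-burning configurations to coincide; as written, your proposal names neither (i) nor (ii), so the converse direction stands unproven.
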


The question of determining all recurrent configurations for the ASM on a given graph is however less straightforward. 
The computations required for going through all possible stable configurations on a graph and checking whether the above criterion holds are significant, since
the number of  possible stable configurations on a graph equals the product of the degrees of non-sink vertices of that graph, so for the complete graph $K_n$, for example, 
this is~$(n-1)^{n-1}$.

Using the matrix-tree theorem (see e.g. \cite{Biggs93}), one can show 
that the number of recurrent configurations on a graph $G$ equals the number of spanning trees of $G$. There are various bijective proofs of this result: In \cite{Red} the author provides a non-canonical bijection between recurrent configurations and spanning trees, while the authors of \cite{Ber} and \cite{clb} both provided refined versions which enumerate recurrent configurations according to the \emph{level} statistic, where the level of a configuration is the sum of its entries, plus the degree of the sink minus the number of edges of the graph.
In \cite{ds} the authors study the problem of classifying recurrent configurations of the ASM, and present several results in the case where the graph may be decomposed in a variety of ways.

This problem has also been approached for various specific families of graphs, with some interesting links to other combinatorial structures.
\begin{itemize}
\item Trees and cycles are straightforward (see e.g. \cite{Red}).
\item Complete graphs $K_n$ \cite{cr}. The authors show that the set of recurrent configurations is in one-to-one correspondence with the set of \emph{parking functions}.
\item Complete multipartite graphs with a dominating sink $K_{1,p_1,\ldots,p_k}$ \cite{cp}. The authors show that the set of recurrent configurations is in one-to-one correspondence with the set of \emph{$(p_1,\ldots,p_k)$-parking functions}.
\item Complete bipartite graphs $K_{m,n}$ where the sink is in a particular one of the two parts \cite{dlb} (see also \cite{aadhlb,aadlb}). The authors show that the set of ordered recurrent configurations is in one-to-one correspondence with the set of \emph{parallelogram polyominoes} on an $m \times n$ grid.
\end{itemize}
In this paper, we classify all recurrent configurations for the ASM on Ferrers graphs. These graphs are a class of bipartite graphs which include the complete bipartite graphs (corresponding to rectangular Ferrers diagrams). As such, this work can be seen as complementing and extending that of \cite{aadhlb,aadlb,dlb,sss}.

%%%%%%%%%%%%%%%%%%%%%%%%%%%%%%%%%%%%%%%%%%%%%%%%%%%%%%%%%%%%%%%%%%%%%%%%%%%%%%%%%%%%%%%%%%%%%%%%%
%%%%%%%%%%%%%%%%%%%%%%%% SECTION %%%%%%%%%%%%%%%%%%%%%%%%%%%%%%%%%%%%%%%%%%%%%%%%%%%%%%%%%%%%%%%%
%%%%%%%%%%%%%%%%%%%%%%%%%%%%%%%%%%%%%%%%%%%%%%%%%%%%%%%%%%%%%%%%%%%%%%%%%%%%%%%%%%%%%%%%%%%%%%%%%

\section{EW-tableaux, permutations, and minimal recurrent configurations of the ASM on Ferrers graphs}\label{sec:EWT,perm,minrec}

\subsection{Mapping EW-tableaux to minimal recurrent configurations}\label{sec:map_ttc}

In \cite[Theorem~22]{sss}, the authors established a bijection 
$$
\ttc: \EW(F) \to \MinRec(G(F)),
$$
defined as follows:
\begin{definition}\label{ttcdefn}
Given $T \in \EW(F)$, let $\ttc(T)$ be the sandpile configuration $(c_1,\ldots,c_n)$ on the graph $G(F)$
where 
$$c_i=\begin{cases} \mbox{number of 1s in row labeled $i$ of $T$} &  \mbox{if $i \in \rowlabs(F)$},\\ \mbox{number of 0s in column labeled $i$ of $T$} & \mbox{if $i \in \collabs(F)$.} \end{cases}$$
\end{definition}

\begin{example}\label{examplethree}
Recall the following EW-tableau $T$ from Example~\ref{firsttabex}:
\begin{center}
\EWdiagram{11111,010,011,01}{0,3,4,6}{8,7,5,2,1}
\end{center}
We form $\ttc(T) = (c_1,\ldots,c_8)$ as follows:
The row labels are $\rowlabs(T) = \{0,3,4,6\}$ and the column labels are $\collabs(T)=\{1,2,5,7,8\}$.
The quantity $c_3$ is the number of 1s in row labeled 3 (which is the second row) of $T$, so $c_3 = 1$.
Similarly, $c_4=2$ and $c_6=1$.
The quantity $c_1$ is the number of 0s in column labeled 1 of $T$ (the rightmost column), so $c_1=0$.
Similarly, $c_2=0$, $c_5=1$, $c_7=0$ and $c_8=3$.
Therefore
$\ttc(T) = (0,0,1,2,1,1,0,3).$
\end{example}

\subsection{Canonical toppling}\label{sec:canontop}

In this section we recall the notion of \emph{canonical topplings} for Ferrers graphs,
which was introduced in the case of complete bipartite graphs by Dukes and Le Borgne~\cite{dlb}. 
This notion helps us give explicit descriptions of the inverse of $\ttc$ (in Section~\ref{sec:map_ctt}) and of the
bijection between EW-tableaux and permutations introduced in \cite{sss} (recalled also at the end of this section).

\begin{definition}\label{def:canontop}
Given $c \in \Rec(G)$ where $G=G(F)$, let
$$\CanonTop (c) := (\U{c}{0},\V{c}{1},\U{c}{1},\V{c}{2}\ldots)$$
be the sequence of canonical topplings that occur as a result of toppling the sink vertex $0$ followed by alternately toppling the sets of all the unstable vertices in $\collabs(F)$ and $\rowlabs(F)$.  Note that $\U{c}{0}=\{0\}$, $\U{c}{i} \subseteq \rowlabs(F)$ and $\V{c}{i} \subseteq \collabs(F)$ for all $i$, and that by Proposition~\ref{pro:DharBurning} $\CanonTop(c)$ forms an ordered partition of the vertex set $\{0,\ldots,n\}$.
\end{definition}

\begin{example}\label{ex:canontop}
  \newcommand{\ul}[1]{\underline{#1}} 
Let $T$ be the tableau from Example~\ref{examplethree} so that $\ttc(T) = (0,0,1,2,1,1,0,3)$.  For ease of reference, let us note that the array recording the least unstable height of each vertex is $(1,1,3,3,3,2,4,4)$.  Also, let us underline those parts that correspond to the same part of the graph as the sink. So
$$
\ttc(T)=c^{(0)} = (0,0,\ul{1},\ul{2},1,\ul{1},0,3).
$$
Topple the sink, which is vertex $0$, so $\U{c}{0} = \{0\}$.  This causes vertices $1,2,5,7,8$ to inherit a single grain each.  Compare with the array $(1,1,3,3,3,2,4,4)$ and mark with an asterisk, and subsequently topple, those vertices whose values equal or exceed these, and repeat until there are no vertices to topple:
\begin{center}\begin{tikzpicture}[scale=1.1]
\node (c1) at (0,4){$c^{(1)} =  (1^*,1^*,\ul{1},\ul{2},2,\ul{1},1,4^*)$};
\node (c2) at (0,3){$c^{(2)} =  (0,0,\ul{2},\ul{3}^*,2,\ul{2}^*,1,0)$};
\node (c3) at (0,2){$c^{(3)} =  (0,0,\ul{2},\ul{0},3^*,\ul{0},3,2)$};
\node (c4) at (0,1){$c^{(4)} =  (0,0,\ul{3}^*,\ul{1},0,\ul{0},3,2)$};
\node (c5) at (0,0){$c^{(5)} =  (0,0,\ul{0},\ul{1},1,\ul{0},4^*,3)$};
\node (c6) at (0,-1){$c^{(5)} =  (0,0,\ul{1},\ul{2},1,\ul{1},0,3)=\ttc(T)$};
\draw[->] (c1) -- (c2);
\draw[->] (c2) -- (c3);
\draw[->] (c3) -- (c4);
\draw[->] (c4) -- (c5);
\draw[->] (c5) -- (c6);
\node (a1) at (3,3.5){\tiny $\V{c}{1} = \{1,2,8\}$};
\node (a2) at (3,2.5){\tiny $\U{c}{1} = \{4,6\}$};
\node (a3) at (3,1.5){\tiny $\V{c}{2} = \{5\}$};
\node (a4) at (3,.5){\tiny $\U{c}{2} = \{3\}$};
\node (a4) at (3,-.5){\tiny $\V{c}{3} = \{7\}$};
\node (inv) at (-4.1,5.25){\mbox{}};
\end{tikzpicture}\end{center}
Consequently, 
\begin{align*}
\CanonTop (\ttc(T)) &= (\{0\}, \{1,2,8\},  \{4,6\} , \{5\}, \{3\}, \{7\}). 
\end{align*}
\end{example}

Every recurrent configuration of a graph $G$ has a unique canonical toppling. The following lemma shows that each recurrent configuration corresponds to a unique minimal recurrent configuration, that is, the minimal recurrent configuration with the same canonical toppling.

\begin{lemma}\label{minlemma}
For every recurrent state $c'\in \Rec(G)$ there is a minimal recurrent state 
$\minrec(c') \in \MinRec(G)$, such that $\CanonTop(\minrec(c'))=\CanonTop(c')$.
We can compute $c=\minrec(c')$ by
\begin{equation}\label{eq:def_minc}
c_j = \begin{cases}
\big\vert \{ \ell \in \V{c'}{k} ~:~ j<\ell,\,k>i \} \big\vert &  \mbox{ if } j \in \U{c'}{i}, \\
\big\vert \{ \ell \in \U{c'}{k} ~:~  j>\ell,\,k\ge i \} \big\vert & \mbox{ if } j \in \V{c'}{i}. \\
\end{cases}
\end{equation}
Moreover, for any $c \in \MinRec(G)$, we have $c=\minrec(c)$.
\end{lemma}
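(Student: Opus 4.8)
The plan is to show that the configuration $c$ defined by~\eqref{eq:def_minc} is exactly the one in which, during the canonical toppling of $c'$, every vertex topples carrying precisely its degree many grains (no excess), and then to exploit this ``at threshold'' property to obtain recurrence, the equality of canonical topplings, minimality, and the idempotence clause in one sweep. First I would record the identity behind~\eqref{eq:def_minc}. For a row vertex $j\in\U{c'}{i}$ (so $i\ge 1$) the neighbours are the columns $\ell>j$, and these are partitioned by the blocks $\V{c'}{k}$; thus $\deg(j)=\big\vert\{\ell\in\V{c'}{k}:j<\ell\}\big\vert$ and~\eqref{eq:def_minc} says $c_j=\deg(j)-\big\vert\{\ell\in\V{c'}{k}:j<\ell,\,k\le i\}\big\vert$. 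The subtracted quantity is exactly the number of grains row $j$ has received once $\V{c'}{1},\ldots,\V{c'}{i}$ have toppled, so $c_j+(\text{grains received through column-stage }i)=\deg(j)$. Dually, for a column vertex $j\in\V{c'}{i}$, whose neighbours are the rows $\ell<j$, one gets $c_j+(\text{grains received through row-stage }i-1)=\deg(j)$, with the sink block $\U{c'}{0}=\{0\}$ supplying one received grain. Non-negativity of $c$ is immediate from the counting form of~\eqref{eq:def_minc}, and stability $c_j<\deg(j)$ holds because $j$ genuinely becomes unstable at its stage in the toppling of $c'$, so the subtracted term is at least $1$ (for columns this is automatic from the sink block).

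Next I would run the canonical toppling on $c$ itself, in the order $\U{c'}{0},\V{c'}{1},\U{c'}{1},\V{c'}{2},\ldots$. Since rows receive grains only from columns and columns only from rows, a row's height changes only at column-stages and a column's only at row-stages. The identities above show that each $j\in\U{c'}{i}$ reaches exactly $\deg(j)$ the instant $\V{c'}{i}$ has toppled, and each $j\in\V{c'}{i}$ reaches exactly $\deg(j)$ the instant $\U{c'}{i-1}$ has toppled, so every vertex becomes unstable no later than its designated block. The point needing care---the main obstacle---is the reverse direction, that $j$ is still stable just before its block. For $j\in\U{c'}{i}$ this is precisely the statement $\big\vert\{\ell\in\V{c'}{i}:\ell>j\}\big\vert\ge 1$, which is exactly what it means for $j$ to become unstable at column-stage $i$ rather than earlier in the toppling of $c'$; the analogous statement for columns uses $\U{c'}{i-1}$. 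Granting this, the toppling of $c$ proceeds block-for-block identically to that of $c'$, whence $\CanonTop(c)=\CanonTop(c')$, and since this is an ordered partition of $\{0,\ldots,n\}$, Proposition~\ref{pro:DharBurning} certifies that $c$ is recurrent.

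Finally I would settle minimality and idempotence by a single counting argument. Summing~\eqref{eq:def_minc} over all non-sink vertices counts each connected pair $(r,\ell)$ with $r\in\rowlabs(F)\setminus\{0\}$, $\ell\in\collabs(F)$, $r<\ell$ exactly once: the row-case records those pairs whose column-block index strictly exceeds the row-block index, and the column-case records the complementary pairs (row-block index at least the column-block index), so together every such edge is counted once and none twice. Hence $\sum_j c_j=\vert E(G)\vert-\deg(0)$, a value independent of $c'$. For an arbitrary $d\in\Rec(G)$, applying the threshold identity to $d$ gives $d_j$ at least the value of~\eqref{eq:def_minc} evaluated on $\CanonTop(d)$, since $d$ must carry enough grains for $j$ to topple at its stage; summing yields $\sum_j d_j\ge\vert E(G)\vert-\deg(0)$. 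Thus this quantity is the minimum height-sum over $\Rec(G)$ and $c=\minrec(c')$ attains it, so $c\in\MinRec(G)$. For the ``moreover'', if $c\in\MinRec(G)$ then its height-sum equals this minimum, which is also the height-sum of $\minrec(c)$; together with the termwise inequality $c_j\ge\minrec(c)_j$ just established, every inequality must be an equality, giving $c=\minrec(c)$.
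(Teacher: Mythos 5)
Your proof is correct, and its spine matches the paper's: rewrite \eqref{eq:def_minc} as the ``threshold'' identity (each vertex holds exactly its degree at the instant its block in $\CanonTop(c')$ is reached), deduce that $c$ can be toppled block-for-block in the order of $\CanonTop(c')$, invoke Proposition~\ref{pro:DharBurning} to get recurrence and $\CanonTop(c)=\CanonTop(c')$, and count edges not incident to the sink to get $\sum_j c_j=\vert E(G)\vert-d_0$, where $d_0$ is the sink's degree. You diverge in two places, both soundly. For minimality, the paper simply cites Biggs for the fact that a recurrent configuration with this height-sum is minimal; you prove the needed lower bound yourself, observing that the threshold identity applied to an arbitrary $d\in\Rec(G)$ gives the termwise inequality $d_j\ge\minrec(d)_j$ and hence $\sum_j d_j\ge\vert E(G)\vert-d_0$, which makes the argument self-contained. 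For the ``moreover'' clause, the paper perturbs: if the stability inequality were strict at some vertex of a minimal $c$, removing one grain there would leave a configuration still passing Dhar's criterion, contradicting minimality; you instead sandwich, combining the termwise bound $c_j\ge\minrec(c)_j$ with equality of the two height-sums to force coordinatewise equality. Your route lets a single counting argument do triple duty (the value of the sum, the lower bound over $\Rec(G)$, and idempotence), at the price of routing the ``moreover'' claim through the global sum; the paper's perturbation argument is local and is essentially the inequality it later highlights in Remark~\ref{rmk:ineq}. A further small plus on your side: you explicitly verify that $c$ is stable (every vertex has a neighbor in the block of $\CanonTop(c')$ immediately preceding its own), a hypothesis of Dhar's criterion that the paper's proof leaves implicit.
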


\begin{proof}
  First we show that $c=\minrec(c')$ is a minimal recurrent state.  Suppose $j \in \V{c'}{i}$ and define $d_j^{(p)}$ to be the number of neighbors of the vertex~$j$ in block $\U{c'}{p}$ (all of which are smaller than $j$) for any $p$ and let $d_j$ be the degree of vertex $j$. The definition of $c_j$ in Equation~\eqref{eq:def_minc} can be rewritten as
\begin{equation}\label{eq:djval}c_j + \sum_{p=0}^{i-1} d_j^{(p)} = d_j.\end{equation}
This says that, starting from $c$, after toppling the vertices in $\U{c'}{0},\ldots,\U{c'}{i-1}$, the vertex $j$ would have exactly $d_j$ grains of sand (and thus be unstable).

A straightforward induction argument then shows that, starting from $c$, the vertices $[0,n]$ can be toppled in the order of $\CanonTop(c')$, returning to the configuration $c$ (that this returns to $c$ is a consequence of Dhar's criterion, Proposition~\ref{pro:DharBurning}).  Thus, by Proposition~\ref{pro:DharBurning}, the configuration $c$ is recurrent.  
To see that it is minimal note that $\sum_{j=1}^{n}c_j=\vert E(G)\vert-d_0$ because every
edge $(a,b)$ not incident to the sink is counted exactly once, namely by $c_a$ if $a$ topples before $b$ but otherwise by $c_b$.
This implies that $c$ is minimal with respect to its total number of grains (see e.g.~\cite{Biggs}).

Next we show that $\CanonTop(c)=\CanonTop(c')$. We know that $\U{c}{0}=\U{c'}{0}$ and proceed by induction.  Consider any $j\in\V{c'}{i}$ and assume $\U{c}{k}=\U{c'}{k}$ and $\V{c}{k}=\V{c'}{k}$ for all $k<i$.  We must show that $j\in\V{c}{i}$. By induction we know that $j$ is not in $\V{c}{k}$ for any $k<i$, and by Equation~\eqref{eq:djval} we know that vertex $j$ becomes unstable after toppling $\U{c'}{0},\ldots,\U{c'}{i-1}$ which is identical to $\U{c}{0},\ldots,\U{c}{i-1}$, and thus we must have $j\in\V{c}{i}$. An analogous argument holds for $j\in\U{c}{i}$. Therefore, $\CanonTop(c)=\CanonTop(c')$.

Finally, we show that if $c \in \MinRec(G)$, then $c=\minrec(c)$. Given any $j\in\V{c}{i}$ 
we know that 
\begin{equation}\label{pink1}
c_j + \sum_{p=0}^{i-1} d_j^{(p)} \ge d_j,
\end{equation}

as $j$ is toppling in block $\V{c}{i}$. We need to show that this is an equality, so suppose for a contradiction that this is not the case. From $c$ we can create a new configuration $c'$ by decreasing $c_j$ by one.  Then, starting from $c'$ and toppling the sink, the vertices of $G$ can be toppled in the order of $\CanonTop(c)$, and by Dhar's criterion~(see Proposition~\ref{pro:DharBurning}), the configuration $c'$ is recurrent. Since it has one fewer grains than $c$ in total, this contradicts the minimality of $c$. The case for $j \in \U{c}{i}$ is analogous.
\end{proof}

As an immediate consequence of Lemma~\ref{minlemma} we get the following result.

\begin{theorem}\label{thm:unique_minrec_canontop}
Given $c,c' \in \MinRec(G)$, we have $\CanonTop(c) = \CanonTop(c')$ if and only if $c=c'$.
\end{theorem}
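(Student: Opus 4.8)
The plan is to derive Theorem~\ref{thm:unique_minrec_canontop} directly from the properties of $\minrec$ already established in Lemma~\ref{minlemma}. The statement asserts that on the set $\MinRec(G)$, the map $\CanonTop$ is injective, and this is exactly the kind of conclusion that follows when a map has a left inverse factoring through it. Here the natural left inverse is $\minrec$ itself, so the whole argument should reduce to chaining together two facts: that $\minrec(c)$ depends only on $\CanonTop(c)$, and that $\minrec$ fixes minimal recurrent configurations.

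Concretely, I would argue as follows. The forward direction is the substantive one: assume $c,c' \in \MinRec(G)$ with $\CanonTop(c) = \CanonTop(c')$, and aim to conclude $c = c'$. The key observation is that the defining formula~\eqref{eq:def_minc} for $\minrec$ expresses each coordinate $c_j$ purely in terms of the blocks $\U{c'}{k}$ and $\V{c'}{k}$ of the canonical toppling; that is, $\minrec$ is a function of $\CanonTop$ alone. Hence $\CanonTop(c) = \CanonTop(c')$ immediately forces $\minrec(c) = \minrec(c')$, since the right-hand sides of~\eqref{eq:def_minc} are computed from identical data. Now I invoke the last sentence of Lemma~\ref{minlemma}: since both $c$ and $c'$ lie in $\MinRec(G)$, we have $c = \minrec(c)$ and $c' = \minrec(c')$. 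Combining these three equalities yields $c = \minrec(c) = \minrec(c') = c'$, as desired. The reverse direction ($c = c'$ implies $\CanonTop(c) = \CanonTop(c')$) is trivial, since $\CanonTop$ is a well-defined function of its argument.

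The main conceptual obstacle is simply recognizing that formula~\eqref{eq:def_minc} depends on its input $c'$ only through the ordered partition $\CanonTop(c')$ and not through the actual grain-heights of $c'$; once this is noted, the rest is a one-line composition. I expect no genuine difficulty here, as all the heavy lifting—showing $\minrec(c')$ is minimal and recurrent, that it shares a canonical toppling with $c'$, and that it fixes minimal configurations—was already carried out in Lemma~\ref{minlemma}. This is why the theorem is billed as an immediate consequence. The only point worth stating carefully is the distinction between $\minrec$ as a map whose \emph{formula} references blocks of the canonical toppling versus whose \emph{value} might a priori depend on more; making explicit that the former is all that matters is the crux of the write-up.
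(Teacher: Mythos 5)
Your proposal is correct and matches the paper's intended argument: the paper states the theorem as an immediate consequence of Lemma~\ref{minlemma}, and the implicit reasoning is exactly what you spell out --- formula~\eqref{eq:def_minc} shows $\minrec$ depends only on the canonical toppling, so $\CanonTop(c)=\CanonTop(c')$ gives $\minrec(c)=\minrec(c')$, and the final assertion of the lemma ($c=\minrec(c)$ for minimal recurrent $c$) yields $c=c'$. Your explicit emphasis on the distinction between the formula's dependence on $\CanonTop$ alone versus a priori dependence on the configuration is precisely the detail the paper leaves to the reader.
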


\begin{example}\label{ex:minrec}
Let $F$ be the diagram in Example~\ref{exampleone} and $G=G(F)$.
Using the bijection from tableaux to configurations given in Definition~\ref{ttcdefn}, we have
$$\MinRec(G) = \{(0,0,1,0,2), \, (0,1,0,0,2), \, (0,1,1,0,1)\}.$$
On checking which of these minimal configurations may have more grains added to sites and remain recurrent, we find only one, namely $c=(0,1,1,0,2).$ The canonical toppling of this configuration is $\CanonTop(c) = (\{0\},\{1,3,5\}, \{2,4\})$.  On checking the canonical topplings of the three minimally recurrent configurations, we have
\begin{eqnarray*}
\CanonTop((0,0,1,0,2)) &=& (\{0\}, \{1,3,5\}, \{2,4\}) \\
\CanonTop((0,1,0,0,2)) &=& (\{0\}, \{1,5\}, \{2,4\}, \{3\}) \\
\CanonTop((0,1,1,0,1)) &=& (\{0\}, \{1,3\}, \{2\}, \{5\}, \{4\}).
\end{eqnarray*}
Consequently, $\minrec(c) = (0,0,1,0,2)$.
\end{example}

We may extend the notion of canonical toppling from minimal configurations to EW-tableaux by simply defining: $\CanonTop(T):=\CanonTop(\ttc(T))$. 
Moreover, we can give the following configuration-free definition of $\CanonTop(T)$ which is a simple consequence of emulating the topplings within the EW-tableau. 

\begin{definition}\label{canontopt}
Let $T \in \EW(F)$ and set $k=0$. Copy $T$ to $T'$ and do as follows:
\begin{itemize}
\item for all rows of only 1s in $T'$, record their labels in $\U{T}{k}$ and change all entries in those rows of $T'$ to 0;
\item for all columns of only 0s in $T'$, record their labels in $\V{T}{k+1}$ and change all entries in those columns of $T'$ to 1;
\end{itemize}
While the top row of $T'$ contains any 0s, increase $k$ by 1 and 
repeat both above steps.
With $k$ set to the last value it attained, define
$$
\CanonTop(T):= \left(\U{T}{0},\V{T}{1},\U{T}{1}, \ldots, \V{T}{k},\U{T}{k},\V{T}{k+1}\right).
$$
If the final set $\V{T}{k+1}$ is empty, omit it from the sequence.
\end{definition}

It follows from the construction in~\cite[Section~2]{sss} of the bijection from a  EW-tableaux $T$ to a permutation $\pi=\Psi(T)$ that $\CanonTop(T)$ in Definition~\ref{canontopt} equals the sequence of blocks in the \emph{run decomposition} of~$\pi$, which is the ordered partitioning of the letters of $\pi$ into maximal blocks that are alternately increasing and decreasing.  
We denote the run decomposition of a permutation $\pi$ by $\RunDec(\pi)$.
By, we prefix the run decomposition of~$\pi$ with a single block $\{0\}$ in order to match the initial block of $\CanonTop(T)$ (which is always equal to $\{0\}$), and  the first block after the initial 0-block is always increasing.

The only difference between the construction of $\CanonTop(T)$ and $\pi=\Psi(T)$ is that $\CanonTop(T)$ consists of a sequence of sets, whereas in the construction of $\pi$ we order the letters in those sets alternately increasingly and decreasingly, to specify a permutation. For example, if $\CanonTop(T)=(\{0\},\{4,8\},\{1,3,7\},\{6\}\{2,5\})$ then the corresponding permutation has run decomposition $\Psi(T)=0-48-731-6-52$. In particular, the canonical toppling of the corresponding Ferrers graph can be effected by first toppling vertex 0 and then toppling vertices in the order in which they appear in $\pi$, although it is irrelevant in which order vertices within a particular block are toppled.  We record this in the following theorem for future reference.

\begin{theorem}\label{thm:canontop_perm}
Regarding the blocks in the run decomposition of a permutation as unordered sets, we have, for any EW-tableau $T$:
\[\CanonTop(T)=\RunDec\Big(\Psi(T)\Big).\]
\end{theorem}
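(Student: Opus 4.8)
The plan is to prove the equality block by block, comparing three presentations of the same sequence of label-sets: the sets $\U{T}{k}$ and $\V{T}{k}$ produced by the toppling emulation of Definition~\ref{canontopt}, the blocks assembled by the map $\Psi$ of \cite[Section~2]{sss}, and the maximal monotone runs of the permutation $\Psi(T)$. The key observation, already isolated in the paragraph preceding the theorem, is that $\Psi$ is built in exactly the same alternating rounds as Definition~\ref{canontopt}: in each round one first gathers the rows that have become all-1s and then the columns that have become all-0s, updating the filling in the identical way. I would therefore begin by recalling that construction and checking, by induction on the round index $k$, that the $k$-th set of row labels produced by $\Psi$ coincides with $\U{T}{k}$ and the $k$-th set of column labels with $\V{T}{k}$. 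Since both procedures start from $T$ and apply the same update rule, this induction is routine, and it shows that $\CanonTop(T)$ and the block sequence of $\Psi(T)$ agree as sequences of unordered sets.

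It then remains to see that these blocks are exactly the runs recovered by $\RunDec$. The only difference between the two constructions is that $\Psi$ additionally orders the labels inside each block — increasingly on one type of block and decreasingly on the other, so that, read from left to right, the blocks are alternately increasing and decreasing — while $\CanonTop(T)$ keeps them unordered and prefixes the block $\{0\}$. As the theorem asks only for equality after forgetting the internal order of the runs, it suffices to show that $\RunDec(\Psi(T))$ returns precisely these blocks as sets. One inclusion is immediate: since each block is monotone, it is contained in a single maximal run of $\Psi(T)$.

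The reverse inclusion — that no run crosses a block boundary, so that each run is in turn contained in a single block — is the step I expect to be the main obstacle. Equivalently, I must show that every block boundary is a genuine turning point of $\Psi(T)$, ruling out that two consecutive blocks merge into one longer run. Concretely, at the junction of an increasing block $B$ and the following decreasing block $C$ I need $\max B > \max C$ (so that the last letter of $B$ and the first letter of $C$ form a descent), and dually at the junction of a decreasing block and the following increasing block I need the corresponding minima to be strictly increasing (producing an ascent). This is exactly where the Ferrers structure must be used: a label enters its block only in the round in which the emulation of Definition~\ref{canontopt} forces the associated row or column to clear, and tracing which cells are responsible pins down the extremal labels at each boundary and yields the two strict inequalities. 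I would verify these directly from the update rule, and separately dispose of the two edge cases, namely the initial boundary with the prefixed block $\{0\}$ (whence the first genuine block is increasing) and a possibly empty final block $\V{T}{k+1}$, which is simply omitted. It is conceivable that this maximality is already guaranteed by the defining properties of $\Psi$ in \cite{sss}, in which case this last step reduces to a citation; but absent that, the boundary inequalities are the substantive point, and once they are in hand the blocks coincide with $\RunDec(\Psi(T))$ and the theorem follows.
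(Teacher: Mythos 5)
Your proposal is correct, and its core is the same observation the paper makes: the map $\Psi$ of \cite[Section~2]{sss} is built by exactly the alternating rounds of Definition~\ref{canontopt}, so the blocks it assembles are the sets $\U{T}{k}$ and $\V{T}{k}$, and the only difference is the ordering of the letters inside each block. Where you go beyond the paper is the final step: the paper simply inherits from the cited construction the fact that these blocks are the \emph{maximal} runs (in \cite{sss} the descent bottoms of $\Psi(T)$ are precisely the row labels and the ascent tops precisely the column labels, which is equivalent to having a descent at every junction $\V{T}{k}\to\U{T}{k}$ and an ascent at every junction $\U{T}{k}\to\V{T}{k+1}$), whereas you isolate these junction inequalities and sketch a direct verification. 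Your sketch does go through: a row $i$ enters $\U{T}{k}$ (for $k\ge 1$) only because some cell $(i,j)$ with $j\in\V{T}{k}$ changed from $0$ to $1$ in the previous column step, and cells of $F$ satisfy $i<j$, so $\max\U{T}{k}<\max\V{T}{k}$; dually, a column $j$ enters $\V{T}{k+1}$ only because some cell $(i,j)$ with $i\in\U{T}{k}$ changed from $1$ to $0$, so $\min\U{T}{k}<\min\V{T}{k+1}$. One small correction: your ``immediate'' inclusion (each block, being monotone, lies inside a single maximal run) is not immediate on its own --- in the permutation $213$ the decreasing segment $21$ is split between the runs $2$ and $1$ --- but this costs nothing, since alternation of the blocks together with your two junction inequalities already identifies the block decomposition with the greedy run decomposition, by uniqueness of an alternating decomposition whose junctions are genuine turning points.
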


\subsection{Mapping minimal recurrent configurations to EW-tableaux and to permutations}\label{sec:map_ctt}

Using the notion of canonical toppling we can describe a map 
$$
\ctt:\MinRec(G(F))\mapsto \EW(F)
$$
which is the inverse of the map $\ttc$ described in Section~\ref{sec:map_ttc}.   The following proposition mirrors Corollary~14 in \cite{sss}, which says that the cell in row $r$ and column $c$ in a EW-tableau $T$ has a 1 if and only if $r$ precedes $c$ in $\pi=\Psi(T)$. It then follows from Proposition~13 in \cite{sss} that this map is the inverse of~$\ttc$.

\begin{prop}\label{prop:map_ctt} % CanonTop to Tableau
  Let $c \in \MinRec(G(F))$ and let $T$ be a tableau of shape $F$.  Label the rows and columns of $F$ in the usual way and let $\UU = \rowlabs(F)$ and $\VV=\collabs(F)$.  Letting $T=\ctt(c)$ be the tableau of shape $F$ having the following entries defines an inverse to the map $\ttc$:
$$T_{ij} = \begin{cases}
  0 & \mbox{if $i\in \UU$ topples after $j \in \VV$ in $\CanonTop(c)$,} \\
  1 & \mbox{if $i\in \UU$ topples before $j \in \VV$ in $\CanonTop(c)$}.
\end{cases} $$
\end{prop}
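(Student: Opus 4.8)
The plan is to leverage that $\ttc$ is already known to be a bijection $\EW(F)\to\MinRec(G(F))$ by \cite[Theorem~22]{sss}. For a bijection $f$, any map $g$ satisfying $g\circ f=\mathrm{id}$ on the domain is automatically the two-sided inverse, so it suffices to check the single identity $\ctt(\ttc(T))=T$ for every $T\in\EW(F)$; surjectivity of $\ttc$ then also guarantees that $\ctt$ really does land in $\EW(F)$. Accordingly I would fix $T\in\EW(F)$, set $c:=\ttc(T)$ and $\pi:=\Psi(T)$, and compare the entries of $\ctt(c)$ and $T$ cell by cell, over the cells $(i,j)$ of $F$ (so $i\in\UU$, $j\in\VV$, and necessarily $i<j$).

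The core of the argument is to translate ``topples before'' in $\CanonTop(c)$ into ``precedes'' in $\pi$. By definition $\CanonTop(c)=\CanonTop(\ttc(T))=\CanonTop(T)$, and by Theorem~\ref{thm:canontop_perm} this sequence of blocks equals $\RunDec(\pi)$ when the runs are read as unordered sets. The key structural feature, visible from Definitions~\ref{def:canontop} and~\ref{canontopt}, is that the blocks strictly alternate between subsets of $\rowlabs(F)$ and subsets of $\collabs(F)$; hence a row $i$ and a column $j$ never occupy the same block, and exactly one of them topples strictly before the other. Because $i$ and $j$ therefore lie in distinct runs of $\pi$, their relative order in the one-line notation of $\pi$ is fixed entirely by which run comes first, independent of the alternately increasing/decreasing ordering used inside each run. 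This yields the equivalence: $i$ topples before $j$ in $\CanonTop(c)$ if and only if $i$ precedes $j$ in $\pi$.

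Combining this equivalence with the definition of $\ctt$ gives $\ctt(c)_{ij}=1$ precisely when $i$ precedes $j$ in $\pi$. By Corollary~14 of \cite{sss} this is exactly the condition characterising $T_{ij}=1$, and the complementary statement characterises the $0$ entries. Hence $\ctt(c)_{ij}=T_{ij}$ in every cell, so $\ctt(\ttc(T))=T$, and the inverse claim follows (this last step may equally be read off \cite[Proposition~13]{sss}).

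I expect the main obstacle to be the middle paragraph: justifying that the relative order of a row and a column is recorded faithfully by the block order of the canonical toppling, and is not distorted by the internal increasing/decreasing structure of the runs of $\pi$. Everything rests on the alternation of row-blocks and column-blocks guaranteed by the construction, which ensures that no row and column are ever toppled together; the remaining steps are bookkeeping built on Theorem~\ref{thm:canontop_perm} and the cited cell characterisation from \cite{sss}.
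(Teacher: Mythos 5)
Your proposal is correct and follows essentially the same route as the paper: the paper likewise reduces the claim to Corollary~14 of \cite{sss} (the characterisation $T_{ij}=1$ iff $i$ precedes $j$ in $\Psi(T)$) together with the bijectivity results of \cite{sss}, with Theorem~\ref{thm:canontop_perm} supplying the link between block order in $\CanonTop$ and precedence in the permutation. Your middle paragraph merely spells out in detail the alternation-of-blocks argument that the paper leaves implicit in the word ``mirrors,'' so there is no substantive difference.
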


\begin{example}\label{topexamp}
  Let $c=(0,0,2,1,0,0,3,2)$ and let $F$ be the Ferrers diagram $(5,3,3,2)$.  The canonical toppling for this configuration is %(cf. Example~\ref{zetaex})
$$
\CanonTop(c) = (\{0\},\{1,2,7\},\{3\},\{8\},\{4,6\},\{5\}).
$$

Now $T_{0j}=1$ for all $j$ since $0$ topples before all other vertices. 
Similarly $T_{35} = 1$ since 3 topples before 5, and $T_{37}=0$ since 3 topples after 7, and $T_{38}=1$ since 3 topples before 8 does.
For the third row: $T_{45}=1$, $T_{47}=0$, and $T_{48}=0$.
For the fourth row: $T_{67}=0$ since 6 topples after 7 does, and $T_{68}=0$ for the same reason.
Therefore $\ctt(c)$ is:
\begin{center}
\EWdiagram{11111,101,001,00}{0,3,4,6}{8,7,5,2,1}
\end{center}
The permutation corresponding to this EW-tableau is 12738645, whose run decomposition is given by 0-127-3-8-64-5.
\end{example}

\begin{remark}\label{rem:x-patterns}
  Proposition~\ref{prop:map_ctt} allows us to see why the patterns $\begin{smallmatrix} 0&1 \\ 1&0 \end{smallmatrix}$ and $\begin{smallmatrix} 1&0 \\ 0&1 \end{smallmatrix}$ are forbidden for EW-tableaux, given their one-to-one correspondence to minimal recurrent configurations on Ferrers graphs.  
Namely, suppose that there is a configuration $c \in \MinRec(G(F))$ such that the tableau $T=\ctt(c)$ contains the pattern $\begin{smallmatrix} 0&1 \\ 1&0 \end{smallmatrix}$.  This implies that there exist values $i<\ell < k <j$ such that $T_{ij}=0$, $T_{ik}=1$, $T_{\ell j} = 1$, and $T_{\ell k} = 0$.  Definition~\ref{prop:map_ctt} then tells us that in the canonical toppling of $c$, $j$ topples before $i$, $i$ topples before $k$, $k$ topples before $\ell$ and $\ell$ topples before $j$, which leads to a contradiction. The case of the other pattern is analogous.
\end{remark}

By Proposition~\ref{prop:map_ctt}, for every Ferrers diagram $F$ we get a bijection 
$$
\zeta:\MinRec(G(F))\rightarrow\cls{n}(\rowlabs(F) \setminus \{0\}),
$$
where for $D\subset[1,n]$, $\cls{n}(D)$ is the set of permutations whose set of descent bottoms (letters preceded by a greater one) is $D$, and $\zeta=\Psi\cdot\ctt$, see Figure~\ref{fig:bijections}. Theorem~\ref{thm:canontop_perm} then implies the following corollary which gives an explicit description of $\zeta$.

\begin{figure}[ht]
\centering
\begin{tikzpicture}
\node (T) at (2,2){$\EW(F)$};
\node (P) at (0,0){$\mathsf{\cls{n}(\rowlabs(F) \setminus \{0\})}$};
\node (R) at (4,0){$\MinRec(G(F))$};
\draw[->] (1.7,1.7) -- (0.3,0.3);
\draw[->] (2.5,1.7) -- (4.2,0.3);
\draw[->] (3.8,0.3) -- (2.1,1.7);
\draw[->] (R) -- (P);
\node (T) at (.85,1.15){$\Psi$};
\node (P) at (3.7,1.25){$\ttc$};
\node (Q) at (2.4,1){$\ctt$};
\node (R) at (2,-.25){$\zeta$};
\end{tikzpicture}
\caption{A depiction of the bijections between EW-tableaux, minimal recurrent configurations and permutations, for a given Ferrers diagram $F$.\label{fig:bijections}}
\end{figure}
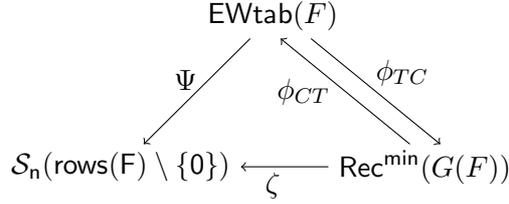

\begin{corollary}\label{cor:zeta}
For any $c\in\MinRec(G(F))$ we have
$$
\zeta(c)=\inc(\V{c}{1})\dec(\U{c}{1})\inc(\V{c}{2})\cdots, $$
where $\inc$ and $\dec$ denote  increasing and decreasing listings, respectively, of the sets in question.
\end{corollary}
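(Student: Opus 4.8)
The plan is to deduce the formula directly from Theorem~\ref{thm:canontop_perm}, using the fact that $\ctt$ inverts $\ttc$ to transport the combinatorial data of $c$ onto the tableau $T:=\ctt(c)$. First I would unpack the definition $\zeta=\Psi\cdot\ctt$ and record that, since $\ctt$ is the inverse of $\ttc$ (Proposition~\ref{prop:map_ctt}), we have $\ttc(T)=c$; combined with the defining equation $\CanonTop(T):=\CanonTop(\ttc(T))$, this gives $\CanonTop(T)=\CanonTop(c)$. Applying Theorem~\ref{thm:canontop_perm} to $T$ then shows that, regarded as unordered sets listed in order, the blocks of $\RunDec(\zeta(c))=\RunDec(\Psi(T))$ coincide exactly with the blocks of $\CanonTop(c)=\big(\U{c}{0},\V{c}{1},\U{c}{1},\V{c}{2},\ldots\big)$. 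In particular, reading the permutation $\zeta(c)$ from left to right we encounter first the letters of $\V{c}{1}$, then those of $\U{c}{1}$, then $\V{c}{2}$, and so on, while the prefix block $\U{c}{0}=\{0\}$ (the sink) is dropped, as it does not appear as a letter of the permutation.

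It then remains to pin down the internal ordering of the letters inside each block, and for this I would invoke the defining property of the run decomposition recalled just before Theorem~\ref{thm:canontop_perm}: its blocks are maximal monotone runs that alternate between increasing and decreasing, with the first run after the initial $\{0\}$ block being increasing. Since $\CanonTop(c)$ interleaves the column-label blocks $\V{c}{i}$ and the row-label blocks $\U{c}{i}$ starting with $\V{c}{1}$, the $\V{c}{i}$ necessarily occupy the odd positions (increasing runs) and the $\U{c}{i}$ the even positions (decreasing runs). Hence inside $\zeta(c)$ each $\V{c}{i}$ is written in increasing order, i.e. as $\inc(\V{c}{i})$, and each $\U{c}{i}$ in decreasing order, i.e. as $\dec(\U{c}{i})$. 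Concatenating these monotone listings in the order dictated by $\CanonTop(c)$ yields precisely $\zeta(c)=\inc(\V{c}{1})\dec(\U{c}{1})\inc(\V{c}{2})\cdots$, as claimed.

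The only genuinely delicate point, and the one I would treat most carefully, is the correct \emph{alignment} of the alternation, namely that the column blocks land on increasing runs and the row blocks on decreasing runs, and that this remains consistent when some block is a singleton. A singleton is simultaneously increasing and decreasing, so it could a priori cause ambiguity in the alternation; this is resolved by the stated convention fixing the run after $\{0\}$ to be increasing together with the strict alternation of successive runs, which forces a well-defined direction on every block, while for a singleton $\inc$ and $\dec$ agree and so the formula is unaffected in any case. Everything else is a routine matter of reading off the already-established equality $\RunDec(\zeta(c))=\CanonTop(c)$ of block structures.
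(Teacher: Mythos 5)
Your proof is correct and takes essentially the same route as the paper, which obtains the corollary directly from Theorem~\ref{thm:canontop_perm} together with the stated convention that the run-decomposition blocks after the initial $\{0\}$ alternate increasing/decreasing beginning with an increasing run, so that the $\V{c}{i}$ land on increasing runs and the $\U{c}{i}$ on decreasing runs. Your additional discussion of singleton blocks is a harmless elaboration of that same argument.
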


\begin{example}\label{zetaex}
Suppose $c=(0,0,2,1,0,0,3,2)\in \MinRec(G(F))$ where $F$ is the Ferrers diagram with shape $(5,3,3,2)$ in Example~\ref{examplethree}. 
We have $\CanonTop(c) = (\{0\},\{1,2,7\},\{3\},\{8\},\{4,6\},\{5\})$, so $\zeta(c)=12738645$.
\end{example}

%%%%%%%%%%%%%%%%%%%%%%%%%%%%%%%%%%%%%%%%%%%%%%%%%%%%%%%%%%%%%%%%%%%%%%%%%%%%%%%%%%%%%%%%%%%%%%%%%
%%%%%%%%%%%%%%%%%%%%%%%% SECTION %%%%%%%%%%%%%%%%%%%%%%%%%%%%%%%%%%%%%%%%%%%%%%%%%%%%%%%%%%%%%%%%
%%%%%%%%%%%%%%%%%%%%%%%%%%%%%%%%%%%%%%%%%%%%%%%%%%%%%%%%%%%%%%%%%%%%%%%%%%%%%%%%%%%%%%%%%%%%%%%%%

\section{Mapping recurrent configurations to decorated EW-tableaux}\label{sec:recstates_decEWT}
In this section we introduce decorated EW-tableaux and use them to give a characterization of all recurrent configurations of the sandpile model on Ferrers graphs in Theorem~\ref{classif}.  In order to do the latter we will need to introduce the notion of \emph{supplementary tableaux}.

\begin{definition}\label{decEWT}
A {\emph{decorated EW-tableau}} $D$ is a pair $(T,(x_1,\ldots,x_n))$ where
$T \in \EW(F)$ and 
\begin{itemize}
\item $0 \leq x_i < \# {\mbox{ 0s in row $i$ of $F$}}$ if $i \in \rowlabs(F)$,
\item $0 \leq x_i < \# {\mbox{ 1s in column $i$ of $F$}}$ if $i \in \collabs(F)$.
\end{itemize}
Let $\DEW(F)$ be the set of all decorated EW-tableaux on $F$.
\end{definition}

We depict a decorated EW-tableau $(T,(x_1,\ldots,x_n))$ by drawing the tableau $T$ and placing $x_i$ at the end or bottom of the row or column labeled $i$, see Example~\ref{exampletwo}.

This definition of decorated tableaux is such that the decorations of the rows and columns indicate how many more grains are on that vertex in relation to the minimal recurrent configuration to which the EW-tableau corresponds.

A general recurrent configuration can be considered to be represented as a decorated EW-tableau for several different EW-tableaux. We will indicate a way to associate a unique EW-tableau with a general recurrent configuration, using the function $\minrec$.  The purpose of doing this is to construct a bijection.

\begin{example}\label{exampletwo}
Let $F$ be the Ferrers diagram of shape $(3,2,1)$, and recall from Example~\ref{exampleone}:
$$\EW(F)=\left\{  
  A={\scriptstyle\young(111,00,0)}, B={\scriptsize{\young(111,01,0)}}, C={\scriptsize{\young(111,10,0)}} \right\}.$$ We can construct six decorated EW-tableaux on $F$:

\begin{equation}\label{tab:decEW}
  \begin{tabular}{cccccc}

\decoEWdiagram{111,00,0}{}{}{}{2/0,1/0}{3/0,2/0,1/0} &  \decoEWdiagram{111,01,0}{}{}{}{2/0,1/0}{3/0,2/0,1/0} & \decoEWdiagram{111,10,0}{}{}{}{2/0,1/0}{3/0,2/0,1/0} \\
\scriptsize(A,(0,0,0,0,0)) &\scriptsize(B,(0,0,0,0,0)) &\scriptsize(C,(0,0,0,0,0))\\[1 em]
\decoEWdiagram{111,00,0}{}{}{}{2/1,1/0}{3/0,2/0,1/0} &  \decoEWdiagram{111,01,0}{}{}{}{2/0,1/0}{3/0,2/1,1/0} & \decoEWdiagram{111,10,0}{}{}{}{2/0,1/0}{3/1,2/0,1/0}\\
 \scriptsize(A,(0,1,0,0,0)) &\scriptsize(B(0,0,1,0,0)) &\scriptsize(C,(0,0,0,0,1))
  \end{tabular}
\end{equation}
Recall that in Example~\ref{ex:minrec} we saw that
$$\MinRec(G(F)) = \{(0,0,1,0,2), \, (0,1,0,0,2), \, (0,1,1,0,1)\},$$
and the only non-minimal recurrent configuration is $c=(0,1,1,0,2)$.  We can obtain $c$ from all three minimal recurrent configurations via the three decorated EW-tableaux on the bottom row of \eqref{tab:decEW}.
\end{example}

The following definition extends the definition of $T_{ij}$ in Proposition~\ref{prop:map_ctt}, since the restriction of $S(T)$ to the Ferrers shape $F$ gives $T$.

\begin{definition}\label{def:supptab} % CanonTop to Supp Tableaux
Let $T \in \EW(F)$ and label the rows and columns of $F$ as usual.
Let $\UU:= \rowlabs(F)$ and $\VV:=\collabs(F)$.
Then define $S=\tts(T)$ to be the rectangular $|\UU|\times |\VV|$  tableau having entries
$$S_{ij} = \begin{cases}
  0 & \mbox{if $i\in \UU$ appears after $j \in \VV$ in $\CanonTop(T)$} \\
  1 & \mbox{if $i\in \UU$ appears before $j \in \VV$ in $\CanonTop(T)$.}
\end{cases} $$ We call $S$ the {\emph{supplementary tableau (to $T$)}} and will indicate 
this using the notation $S=S(T)$.
\end{definition}

\begin{example}\label{ex:supptab}
Let $T$ be the $EW$-tableau of Example~\ref{topexamp}:
\begin{center}
\EWdiagram{11111,101,001,00}{0,3,4,6}{8,7,5,2,1}
\end{center}
We have $$\CanonTop(T) = (\{0\},\{1,2,7\},\{3\},\{8\},\{4,6\},\{5\}).$$
The tableau $S$ will have $4$ rows (since $\UU=\rowlabs(F)=\{0,3,4,6\}$) and $5$ columns (since $\VV=\collabs(F)=\{1,2,5,7,8\}$).
Then $S_{0i}=1$ for all $i \in \VV$ since $0$ appears before all such $i$. 
For the second row (whose label is 3), we see that 3 appears before $\{5,8\}$ in $\CanonTop(T)$, and that 3 appears after $\{1,2,7\}$, so we have: $S_{35}=S_{38}=1$ and $S_{31}=S_{32}=S_{37}=0$.
For the third row (whose label is 4), we see that 4 appears before 5, and that 4 appears after each of $\{1,2,7,8\}$, so we have $S_{45}=1$ and $S_{41}=S_{42}=S_{47}=S_{48}=0$.
For the fourth row (whose label is 6), we see that 6 appears before $\{5\}$, and 6 appears after $\{1,2,7,8\}$, so we have $S_{65}=1$ and $S_{61}=S_{62}=S_{67}=S_{68}=0$.
Therefore
\begin{center}
\EWdiagramlab{11111,10100,00100,00100}{0,3,4,6}{8,7,5,2,1}{$\tts(T)=$}
\end{center}
\end{example}

We can now use $\ctt$ to define a map from recurrent configurations to decorated EW-tableaux. 

\begin{definition}\label{def:map_rec_to_decEWT}
Given $x \in \Rec(G(F))$ where $F$ is a Ferrers diagram having semi-perimeter $n+1$, let $\dew(x) = (\ctt(\minrec(x)),(a_1,\ldots,a_n))$
where $a_i = x_i - \minrec(x)_i$ for all $i\in [1,n]$. Let $\RDEW(F)=\dew(\Rec(G(F)))$. 
\end{definition}

\begin{example}\label{ex:map_rec_to_decEWT}
Let $F$ be the Ferrers diagram of shape $(3,2,1)$, and consider $c=(0,1,1,0,2)$, the unique non-minimal recurrent configuration from Example~\ref{exampletwo}.  Then
\begin{align*}\dew((0,1,1,0,2))
= \left(
	\ctt{(0,0,1,0,2)}
	,
	(0,1,0,0,0)\right)
=\hskip -20pt\raisebox{-2.25em}{\decoEWdiagram{111,00,0}{}{}{}{2/1,1/0}{3/0,2/0,1/0}}
.\end{align*}
\end{example}

We wish to describe a set of canonical decorated EW-tableaux that correspond bijectively 
to all recurrent configurations.  The main result of this section is Theorem~\ref{classif}, in which we give a description of $\RDEW(F)$ in terms of these canonical decorations.

\begin{definition}\label{def:rightangle}
Let $T$ be a EW-tableau of shape $F$. Let $S=S(T)$ be the supplementary tableau
corresponding to $T$.  Let us call a pair of cells $a$ and $b$ in a tableau {\emph{non-attacking}} if they are in different rows and different columns.  We also say 
that $b$ is non-attacking with respect to $a$ if $a$ and $b$ is a non-attacking pair, or simply that $b$ is non-attacking if it is clear from context what $a$ is.
We say that an entry $x \in \{0,1\}$ in $T_{jk}$ is a {\emph{\rightangle}}\ entry if and only if there exists a non-attacking $\mycomp{x}\not=x$ in $S_{j'k'}$ such that $S_{j'k}$ and $S_{jk'}$ both contain $x$.  We say a cell is a \rightangle\ if it contains a \rightangle\ entry and we say $S_{j'k'}$ \emph{induces} $T_{jk}$ to be a \rightangle.
\end{definition}

\begin{example}\label{ex:rightangle}
Let $T$ be the following tableau whose supplementary tableau is shown to its right. The $1$ that appears at position having labels $(0,6)$ is a \rightangle\ entry as is indicated by
the shaded entries on the right.
\begin{center}
\EWdiagramlab{1111,010,110,01}{0,2,3,5}{7,6,4,1}{$T=$}
\begin{tikzpicture}
  \def\minix{0.21}
  \def\miniy{0.21}
\def\xx{0.435}
\def\bx{0}\def\by{0}
\def\initx{0.25}\def\inity{0.37}
\def\jnitx{0.31}\def\jnity{0.47}
\def\stepx{0.43}\def\stepy{0.43}
\fill[black!30!white] (-1.22-\minix,-0.355-\miniy) rectangle (-1.22+\minix,-0.355+\miniy);
\fill[black!30!white] (-1.22-\minix,-1.65-\miniy) rectangle (-1.22+\minix,-1.65+\miniy);
\fill[black!30!white] (-0.355-\minix,-0.355-\miniy) rectangle (-0.355+\minix,-0.355+\miniy);
\fill[black!30!white] (-0.355-\minix,-1.65-\miniy) rectangle (-0.355+\minix,-1.65+\miniy);
\draw (\bx,\by) node [anchor = north east]  {\young(1111,0100,1100,0100)};
\foreach \x/\y/\lab in {0/0/0,2/1/2,2/2/3,3/3/5}  % vertical labels
	\draw (\bx-\initx-4.75*\stepx,\by-\inity-\y*\stepy) node [anchor=west]{\tiny{\lab}};
\foreach \x/\y/\lab in {0/0/1,1/0/4,2/2/6,3/3/7}
	\draw (\bx-\jnitx-\x*\stepx,\by-\jnity+1.7*\stepy) node [anchor=north]{\tiny{\lab}}; % horiztonal labels
\draw (-2.25,-1) node [anchor = east]{$S=$};
\end{tikzpicture}
\end{center}
\end{example}

In order to prove the main result of this section, we need two lemmas.

\begin{lemma}\label{lem:cs}
Suppose $F$ is a Ferrers diagram of semiperimeter $n+1$.
Let $T \in \EW(F)$ and $S=S(T)$.
\begin{enumerate}
\item[(a)] If $j \in \U{T}{\ell}$ and $j<k$, then $k \in \V{T}{\ell}$ if and only if $S_{jk}=0$ and this $0$ is not a \rightangle\ entry.
\item[(b)] If $j \in \V{T}{\ell}$ and $k<j$, then $k \in \U{T}{\ell-1}$ if and only if $S_{kj}=1$ and this $1$ is not a \rightangle\ entry.
\end{enumerate}
\end{lemma}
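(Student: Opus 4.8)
The plan is to reduce both parts to elementary bookkeeping on the block indices of $\CanonTop(T)$. Write $\CanonTop(T)=(\U{T}{0},\V{T}{1},\U{T}{1},\V{T}{2},\ldots)$, so that each row $i\in\rowlabs(F)$ lies in a unique block $\U{T}{a(i)}$ and each column $j\in\collabs(F)$ in a unique block $\V{T}{b(j)}$. Since $\CanonTop(T)$ is an ordered partition of the vertex set (see Definition~\ref{def:canontop}), every block it contains is non-empty, and hence so is every block preceding a non-empty one. Because $\U{T}{a}$ occupies position $2a$ in the sequence and $\V{T}{b}$ position $2b-1$, the definition of $S=S(T)$ unwinds to the single equivalence
\[
S_{ij}=1 \iff a(i)<b(j), \qquad S_{ij}=0 \iff a(i)\ge b(j),
\]
which I would prove first; everything after it is bookkeeping with this equivalence in hand.

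For part (a) put $a=a(j)=\ell$ and $b=b(k)$. As $(j,k)$ is a cell of $F$ we have $S_{jk}=T_{jk}$; the condition $S_{jk}=0$ is exactly $b\le\ell$, and $k\in\V{T}{\ell}$ is exactly $b=\ell$. It therefore suffices to show that, when $S_{jk}=0$, the $0$ is a \rightangle\ entry if and only if $b<\ell$. For the forward direction, a \rightangle\ witness $S_{j'k'}=1$ with $S_{j'k}=S_{jk'}=0$ translates, via the displayed equivalence, into the chain $b\le a(j')<b(k')\le\ell$, forcing $b<\ell$. For the converse, assuming $b<\ell$ (note $b\ge1$), I would exhibit an explicit witness by taking any $j'\in\U{T}{b}$ and any $k'\in\V{T}{b+1}$: both blocks precede the non-empty block $\U{T}{\ell}$ and are hence non-empty, they form a non-attacking pair (lying in different blocks from $j$ and $k$), and the equivalence gives $S_{j'k'}=1$, $S_{j'k}=0$, $S_{jk'}=0$. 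The two implications combine to the stated biconditional.

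Part (b) is the mirror image. With $a=a(k)$ and $b=b(j)=\ell$, the condition $S_{kj}=1$ reads $a<\ell$ and $k\in\U{T}{\ell-1}$ reads $a=\ell-1$, so I must show the $1$ is a \rightangle\ entry if and only if $a<\ell-1$. The forward direction chains the witness inequalities as $a<b(j')\le a(k')<\ell$, again forcing $a<\ell-1$, and the converse uses the witness $k'\in\U{T}{a+1}$, $j'\in\V{T}{a+1}$, for which one checks non-emptiness (both precede the non-empty block $\V{T}{\ell}$), the non-attacking property, and $S_{k'j'}=0$, $S_{k'j}=1$, $S_{kj'}=1$ exactly as before.

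I expect the only genuine obstacle to be these converse halves, where a single witness rectangle must simultaneously realise all three prescribed $S$-values while sitting inside the non-empty part of the partition. The choices $(\U{T}{b},\V{T}{b+1})$ in (a) and $(\U{T}{a+1},\V{T}{a+1})$ in (b) are the extremal ones that work; verifying that their indices never exceed the range of non-empty blocks---using $b\ge1$ in (a) and $a+1\le\ell-1$ in (b)---is the point needing care, while the remaining steps are routine applications of the displayed equivalence.
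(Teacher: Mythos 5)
Your proof is correct and takes essentially the same approach as the paper's: both translate membership in adjacent blocks of $\CanonTop(T)$ into conditions on entries of $S$, and both identify a \rightangle\ entry with the existence of a row--column pair toppling strictly between the blocks of $k$ and $j$ --- the paper phrasing this via the precedence relation, you via block-index arithmetic. The only substantive difference is that you make explicit the witness construction from the intermediate (necessarily non-empty) blocks, a step the paper asserts without detail.
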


\begin{proof}
Let $F,T$ and $S$ be as stated in the lemma. Let us write $a \topbef b$ to indicate that $a$ appears before $b$ in $\CanonTop(T)$.

First suppose that $j \in \U{T}{\ell}$ for some $\ell$ and $j<k$.  Then $k \in \V{T}{\ell-1}$ if and only if $k \topbef j$ and there does not exist $(j',k') \in \UU \times\VV$ such that $k\topbef j' \topbef k' \topbef j$.  Since $j<k$ the condition $k \topbef j$ is equivalent to $T_{jk}=0$, and as $j<k$ we get $T_{jk}=S_{jk}$. Furthermore, $k\topbef j' \topbef k' \topbef j$ where $k,k' \in \VV$ and $j,j' \in \UU$ is equivalent to $S_{j'k}=S_{jk'}=S_{jk}=0$ and $S_{j'k'}=1$, which is equivalent to $S_{jk}$ being a \rightangle\ entry.  Thus
\begin{align*}
k \in \V{T}{\ell} &\iff (T_{jk}=0) \mbox{~and~} \left(\begin{array}{c}\nexists (j',k') \in \UU\backslash\{j\} \times\VV\backslash\{k\} \mbox{ s.t. }\\ k\topbef j' \topbef k' \topbef j\end{array}\right)\\
&\iff (S_{jk}=0) \mbox{~and~} \left(\begin{array}{c}\nexists (j',k') \in \UU\backslash\{j\} \times\VV\backslash\{k\} \mbox{ s.t. }\\ S_{j'k}=S_{jk'}=S_{jk}=0 \ \&\  S_{j'k'}=1\end{array}\right)\\
&\iff (S_{jk}=0) \mbox{ and  this 0 is not a  \rightangle\ entry}.
\end{align*}

Now consider the case that $j \in \V{T}{\ell}$ for some $\ell$ and $k<j$.
By an argument analogous to that above we get:
\begin{align*}
k \in \U{T}{\ell-1} &\iff (S_{kj}=1) \mbox{~and~} \left(\begin{array}{c}\nexists (k',j') \in \UU\backslash\{j\} \times\VV\backslash\{k\} \mbox{ s.t. }\\ k\topbef j' \topbef k' \topbef j\end{array}\right)\\
&\iff (S_{kj}=1) \mbox{~and~} \left(\begin{array}{c}\nexists (k',j') \in \UU\backslash\{j\} \times\VV\backslash\{k\} \mbox{ s.t. }\\ S_{kj'}=S_{k'j}=S_{kj}=1 \mbox{ and } S_{k'j'}=0\end{array}\right)\\
&\iff (S_{kj}=1) \mbox{ and this 1 is not a \rightangle\ entry},
\end{align*}
\end{proof}

In words, Lemma~\ref{lem:cs} states that the entries of $T$ which are not \rightangle\ entries are pairs $(i,j) \in \UU \times \VV$ such that $i$ and $j$ appear in consecutive blocks in $\CanonTop(T)$ (in either order).
In what follows, we will have reason to construct a sequence of numbers $(\mu_j(c))_{j \in [1,n]}$ from minimal recurrent configurations.

\begin{definition}\label{defn:muj}
Suppose $F$ is a Ferrers diagram of semiperimeter $n+1$.
Given $c \in \MinRec(G(F))$
and $j \in [1,n]$, we define 
\begin{equation}\label{eq:def_muc}
\mu_j(c) := 
\begin{cases}
\big\vert \{ k \in \V{c}{\ell} ~:~ k>j \} \big\vert &  \mbox{ if } j \in \U{c}{\ell} \\
\big\vert \{ k \in \U{c}{\ell-1} ~:~  k<j \} \big\vert & \mbox{ if } j \in \V{c}{\ell}. \\
\end{cases}
\end{equation}
\end{definition}

Combining Lemma~\ref{lem:cs} and Definition~\ref{defn:muj} gives the following result.

\begin{lemma}\label{munu}
Suppose $F$ is a Ferrers diagram of semiperimeter $n+1$.
Given $T \in \EW(F)$,
define
$$
\nu_j(T) := 
\begin{cases}
  |\{\mbox{non-\rightangle\ 0s in row labeled $j$}\}| & \mbox{if }j \in \rowlabs(T) \\
  |\{\mbox{non-\rightangle\ 1s in column labeled $j$}\}| & \mbox{if }j \in \collabs(T).
\end{cases}
$$
Then $\nu_j(T)=\mu_j(\ttc(T))$ for all $j \in [1,n]$.
      \end{lemma}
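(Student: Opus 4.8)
The plan is to set $c=\ttc(T)$ and exploit the definitional identity $\CanonTop(T):=\CanonTop(\ttc(T))$, which forces the blocks to coincide: $\U{c}{\ell}=\U{T}{\ell}$ and $\V{c}{\ell}=\V{T}{\ell}$ for every $\ell$. With this in hand, proving $\nu_j(T)=\mu_j(c)$ reduces to matching, case by case, the combinatorial description of the non-\rightangle\ entries of $T$ furnished by Lemma~\ref{lem:cs} against the block-counting description of $\mu_j$ in Definition~\ref{defn:muj}. The split is exactly the $\rowlabs/\collabs$ dichotomy appearing in both definitions.

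First I would treat the case $j\in\rowlabs(T)$, say $j\in\U{T}{\ell}$. The cells of row $j$ in $T$ are precisely the $T_{jk}$ with $k\in\collabs(T)$ and $k>j$, since $(j,k)$ is a cell of $F$ exactly when $j<k$. Because the restriction of $S(T)$ to $F$ equals $T$, each such entry satisfies $S_{jk}=T_{jk}$, so ``non-\rightangle\ $0$ in row $j$ of $T$'' denotes the same cells as ``$S_{jk}=0$ with this $0$ not a \rightangle\ entry''. Lemma~\ref{lem:cs}(a) identifies these bijectively with the $k\in\V{T}{\ell}$ satisfying $k>j$, whence $\nu_j(T)=|\{k\in\V{T}{\ell}:k>j\}|=|\{k\in\V{c}{\ell}:k>j\}|=\mu_j(c)$.

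The case $j\in\collabs(T)$, say $j\in\V{T}{\ell}$, is entirely symmetric: the cells of column $j$ are the $T_{kj}$ with $k\in\rowlabs(T)$ and $k<j$, each equal to $S_{kj}$, and Lemma~\ref{lem:cs}(b) identifies the non-\rightangle\ $1$s among them with $\{k\in\U{T}{\ell-1}:k<j\}$. Counting then gives $\nu_j(T)=|\{k\in\U{c}{\ell-1}:k<j\}|=\mu_j(c)$. Note that in both cases the range restriction ($k>j$ for rows, $k<j$ for columns) is automatic from the Ferrers condition and matches the restriction built into Definition~\ref{defn:muj}, so no entries are miscounted at the boundary.

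The argument is essentially bookkeeping once Lemma~\ref{lem:cs} is available; the one point requiring care, and the step I expect to be the main (minor) obstacle, is keeping straight the three-way identification between an entry $T_{jk}$ of the tableau, its value $S_{jk}$ on the Ferrers shape, and the \rightangle\ property, which is \emph{defined} on $T$ but tested against the full rectangular supplementary tableau $S$. Once one is confident that ``non-\rightangle\ $0$ in row $j$ of $T$'' and ``$S_{jk}=0$ that is not a \rightangle\ entry'' name the same set of cells, both cases become immediate applications of the corresponding part of Lemma~\ref{lem:cs}.
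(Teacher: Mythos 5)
Your proof is correct and takes essentially the same route as the paper: the paper gives no written proof at all, stating the lemma as an immediate consequence of ``combining Lemma~\ref{lem:cs} and Definition~\ref{defn:muj}'', which is precisely the bookkeeping you carry out. The details you supply --- identifying blocks via $\CanonTop(T):=\CanonTop(\ttc(T))$, the agreement of $T$ with $S(T)$ on the Ferrers shape, and the Ferrers condition ($j<k$ for rows, $k<j$ for columns) matching the ranges in Definition~\ref{defn:muj} --- are exactly the implicit steps the paper leaves to the reader.
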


We can now present the main theorem of this section, which allows us to characterize the elements of $\RDEW(F)$.

\begin{theorem} \label{classif}
Let $D=(T,(a_1,\ldots,a_n)) \in \DEW(F)$ be a decorated EW-tableau. 
Then $D \in \RDEW(F)$ if and only if 
$a_j < \nu_j(T)$ for all $j \in [1,n]$.
\end{theorem}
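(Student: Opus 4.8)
The plan is to reduce the membership $(T,(a_1,\ldots,a_n)) \in \RDEW(F)$ to a statement purely about canonical topplings, and then to show that the numbers $\mu_j(c)$ of Definition~\ref{defn:muj} measure exactly how many extra grains each vertex can absorb before its position in the canonical toppling is disturbed. Write $c := \ttc(T)$, so that $c \in \MinRec(G(F))$ with $\CanonTop(c) = \CanonTop(T)$, and recall from Lemma~\ref{munu} that $\nu_j(T) = \mu_j(c)$; it therefore suffices to prove that $(T,(a_1,\ldots,a_n)) \in \RDEW(F)$ if and only if $a_j < \mu_j(c)$ for all $j \in [1,n]$. First I would unwind Definition~\ref{def:map_rec_to_decEWT}: since $\ttc$ and $\ctt$ are mutually inverse, $(T,(a)) = \dew(x)$ forces $\minrec(x) = c$ and $x = c + a$, so that $(T,(a)) \in \RDEW(F)$ is equivalent to the conjunction ``$x := c + a$ is recurrent and $\minrec(x) = c$''. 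Using Lemma~\ref{minlemma} (which gives $\CanonTop(\minrec(x)) = \CanonTop(x)$) together with Theorem~\ref{thm:unique_minrec_canontop}, this conjunction is in turn equivalent to the single condition $\CanonTop(c+a) = \CanonTop(c)$; indeed, once that holds, the canonical sequence is by construction a valid Dhar ordering, so Proposition~\ref{pro:DharBurning} guarantees $c+a$ is recurrent with $\minrec(c+a)=c$.

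The heart of the argument is the following quantitative reading of $\mu_j(c)$. Suppose $j \in \V{c}{\ell}$ is a column vertex; the block immediately preceding $j$'s block in $\CanonTop(c)$ is $\U{c}{\ell-1}$, and $\mu_j(c)$ counts exactly the neighbours of $j$ lying in that block. Since $j$'s height changes only when its (row) neighbours topple, I would run the canonical toppling of $c$ and record that, just before $\U{c}{\ell-1}$ topples, vertex $j$ holds exactly $d_j - \mu_j(c)$ grains. This is where minimality enters: Equation~\eqref{eq:djval} in the proof of Lemma~\ref{minlemma} states $c_j + \sum_{p<\ell} d_j^{(p)} = d_j$, so $j$ reaches its threshold $d_j$ precisely as $\U{c}{\ell-1}$ finishes toppling, and subtracting the $\mu_j(c)$ grains contributed by $\U{c}{\ell-1}$ gives $d_j - \mu_j(c)$ one stage earlier. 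The case $j \in \U{c}{\ell}$ is symmetric, with $\V{c}{\ell}$ playing the role of the preceding block.

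For the forward implication I would prove $\CanonTop(c+a) = \CanonTop(c)$ by induction on the blocks, running the process of Definition~\ref{canontopt} on $c+a$ and on $c$ in parallel; write $B_0, B_1, \ldots$ for the successive blocks of $\CanonTop(c)$. Assuming the first $t$ blocks agree, every vertex of $B_t$ is still unstable for $c+a$ because $c+a \ge c$, so the real content of the step is that no vertex destined for a later block topples early. For such a $j$, its $(c+a)$-height after the common blocks $B_0,\dots,B_{t-1}$ exceeds its $c$-height by exactly $a_j$; by monotonicity of the accumulated-neighbour count this difference is most constraining at the stage immediately before $j$'s own block, where the $c$-height equals $d_j - \mu_j(c)$ by the previous paragraph, so $a_j < \mu_j(c)$ yields $(d_j-\mu_j(c)) + a_j < d_j$ and $j$ stays stable. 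Hence the $t$-th blocks agree as well. The converse is the same computation read backwards: if $\CanonTop(c+a) = \CanonTop(c)$, then $j$ is stable in $c+a$ at the stage just before its block, where its height is $(d_j - \mu_j(c)) + a_j < d_j$, forcing $a_j < \mu_j(c) = \nu_j(T)$.

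I expect the induction in the forward direction to be the main obstacle, for two reasons. One must phrase the canonical process so that column-stages and row-stages strictly alternate and exploit that a vertex's height is unaffected by topplings within its own part of the bipartition --- this is exactly what makes the \emph{immediately preceding} block the binding constraint. And one must justify the monotonicity claim that the stability requirements at all earlier stages are subsumed by the single inequality $a_j < \mu_j(c)$ tied to the stage just before $j$'s block. Everything else reduces to bookkeeping once Lemma~\ref{munu} and the minimality identity~\eqref{eq:djval} are in hand.
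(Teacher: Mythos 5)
Your proposal is correct and follows essentially the same route as the paper's proof: it reduces membership in $\RDEW(F)$ to the equality $\CanonTop(c+a)=\CanonTop(c)$ via Proposition~\ref{prop:map_ctt}, Lemma~\ref{minlemma} and Theorem~\ref{thm:unique_minrec_canontop}, and then converts the inequality $a_j<\nu_j(T)=\mu_j(c)$ into the stability condition ``$j$ has fewer than $d_j$ grains just before its preceding block topples,'' using the minimality identity~\eqref{eq:djval} exactly as the paper does. Your block-by-block induction with the monotonicity observation is just a slightly more explicit phrasing of the paper's argument that, since $c+a\ge c$, vertices can only topple earlier, and the inequality forces each vertex to topple in the same block.
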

\begin{proof}
Let $D=(T,a)$ be a decorated EW-tableau where $a=(a_1,\ldots,a_n)$.
Let $\tilde{c}:=\ttc(T)$ be the unique minimal recurrent configuration corresponding
to $T$ and define $c$ to be the configuration $c:=\tilde{c} + a$.
By the definition of $\RDEW(F)$, the fact that $\ctt$ is the inverse of $\ttc$ (Proposition~\ref{prop:map_ctt}) and Theorem~\ref{thm:unique_minrec_canontop}, we get the following equivalence:
\begin{align*}
D \in \RDEW(F)&\iff\dew(c) = (T,a)
\iff \ctt(\minrec(c))=T\\
&\iff\minrec(c)=\ttc(T)=\tilde{c}\\
&\iff\CanonTop(c)=\CanonTop(\tilde{c})
\end{align*}

So to prove the result it suffices to show that $\CanonTop(c) = \CanonTop(\tilde{c})$ if and only if $a_j < \nu_j(T)$, for all $j \in [1,n]$. First, suppose that $$\CanonTop(c) = \CanonTop(\tilde{c})= (\U{}{0},\V{}{1},\U{}{1},\ldots,\V{}{k},\U{}{k}),$$ where $\U{}{k}$ is possibly empty. Fix $j \in [1,n]$ and suppose $j \in \U{}{i}$ for some $i \geq 1$. For $p \in [1,k]$, define $d_j^{(p)}$ to be the number of neighbors of $j$ in block $\V{}{p}$, that is, $d_j^{(p)}:= |\{\ell \in \V{}{p} ~:~ j < \ell \}|.$
By definition of the canonical toppling, in order for the vertex $j$ to be in the block $\U{}{i}$, the vertex $j$ must be stable after toppling all vertices of $\V{}{1},\V{}{2},\ldots,\V{}{i-1}$.
Toppling all the vertices in $\V{}{p}$ causes vertex $j$ to gain exactly $d_j^{(p)}$ extra grains. Thus, the above implies that we must have
\begin{equation}\label{pink2}
c_j +\sum_{p=1}^{i-1} d_j^{(p)} < d_j.
\end{equation}
By definition $c_j = \tilde{c}_j + a_j$, and by Lemma~\ref{minlemma} we have $\tilde{c}_j = \sum_{p=i+1}^{k} d_j^{(p)}$.
Using this in Equation~\eqref{pink2} we get
\begin{align*}
c_j +\sum_{p=1}^{i-1} d_j^{(p)}\ 
&=\ \tilde{c}_j + a_j + \sum_{p=1}^{i-1} d_j^{(p)}\   
=\ a_j + \sum_{p=i+1}^{k} d_j^{(p)} +  \sum_{p=1}^{i-1} d_j^{(p)} \ \\
&=\ a_j + d_j - d_j^{(i)}\ <\ d_j.
\end{align*}
This simplifies to $a_j < d_j^{(i)}$. 
Since $j \in \U{}{i}$ and $d_j^{(i)} := | \{\ell \in \V{}{i} ~:~ j<\ell\}|$ we have 
$d_j^{(i)} = \mu_j(\tilde{c})  = \mu_j(\ttc(T)) = \nu_j(T)$, from Lemma~\ref{munu}, and the desired result follows.
An analogous argument holds for the case $j \in \V{}{i+1}$ with $i \geq 0$, using the fact that $j$ must be stable after toppling all vertices of $\U{}{0},\U{}{1},\ldots,\U{}{i-1}$.

We now show the converse. Suppose that $a_j < \nu_j(T)=\mu_j(\ttc(T))$ for all $j \in [1,n]$. We wish to show that $\CanonTop(c) = \CanonTop(\tilde{c})$.
Since $c_j \geq \tilde{c}_j$ for all $j \in [1,n]$, we have that starting from $c$, the vertices of $[0,n]$ can be toppled in the order of $\CanonTop(\tilde{c})$, returning to the configuration $c$.
By definition of the canonical toppling, this implies that if $j \in \U{c}{i}$ (resp. $j \in \V{c}{i}$) and $j \in \U{\tilde{c}}{i'}$ (resp. $j \in \V{\tilde{c}}{i'}$) for some $i,i'$, we must have $i \leq i'$.

Suppose that $j \in \U{c}{i}$ and $j \in \U{\tilde{c}}{i'}$ for some $i,i'\geq 1$.  By the above, to show that $i=i'$ it is sufficient to show that vertex $j$ is still stable after toppling all vertices in $\V{c}{1},\ldots,\V{c}{i-1}$.  By assumption, we have $a_j < \nu_j(T)=\mu_j(\ttc(T))$. But, by the above, this is equivalent to $ c_j +\sum_{p=1}^{i-1} d_j^{(p)} < d_j$, which says exactly that $j$ is still stable after toppling all vertices in $\V{c}{1},\ldots,\V{c}{i-1}$. Thus $i=i'$.  By an analogous argument, if $j \in \V{c}{i}$ and $j \in \V{\tilde{c}}{i'}$ for some $i,i'$, we have $i=i'$. This implies that $\CanonTop(c) = \CanonTop(\tilde{c})$, and thus the theorem is proved.
\end{proof}

\begin{remark}\label{rmk:ineq}
The inequalities from Equations~\eqref{pink1} and \eqref{pink2}, which arise from the notion of canonical toppling introduced in Section~\ref{sec:canontop}, were first derived in \cite[Section 2]{Dh-Ma} in the case of general graphs. In that work, the authors made use of these inequalities and Dhar's burning algorithm (Proposition ~\ref{pro:DharBurning}) to establish a bijection between the recurrent configurations of the ASM on a graph $G$ and the spanning trees of $G$. However, this bijection is non-canonical, since it relies on an arbitrary ordering of the neighbors of each vertex of $G$. Indeed in this general setting, such a canonical bijection does not exist, whereas in this paper our use of these inequalities in the more specific setting of Ferrers graphs allows for a description of the recurrent configurations in terms of EW-tableaux whose decoration is canonical.
\end{remark}

\begin{example}\label{ex:big}
Let $T$ be the following EW-tableau and $F$ the underlying Ferrers shape:
\begin{center}
\EWdiagram{1111111111111,00111000000,10111101110,101110,1011,001,101}{0,3,4,10,13,15,16}{19,18,17,14,12,11,9,8,7,6,5,2,1}
\end{center}
The canonical toppling associated with this is 
\begin{align*}\CanonTop(T) = (\{0\},\{1,2,5,9,18\},&\{4,13,16\},\{6,7,8,11\}, \{10\},\\ &\{19\}, \{3,15\}, \{12,14,17\}).
\end{align*}
The supplementary tableau is $S(T)$:
\begin{center}
\begin{tikzpicture}
\def\step{0.43}
\draw (0,0) node [anchor = north west]  {\young(1111111111111,0011100000000,1011110111000,1011100000000,1011110111000,0011100000000,1011110111000)};
\foreach[count=\y] \lab in {0,3,4,10,13,15,16}
	\draw (0.25,-\y*\step+0.05) node [anchor=east]{\tiny{\lab}};
\foreach[count=\x] \lab in {19,18,17,14,12,11,9,8,7,6,5,2,1}
	\draw (\x*\step-0.05,0.2) node [anchor=north]{\tiny{\lab}};
\draw [line width=0.7mm] (0.15,-0.15)--(0.15,-0.15-7*\step)--(0.15+3*\step,-0.15-7*\step)--(0.15+3*\step,-0.15-5*\step)--(0.15+4*\step,-0.15-5*\step)--(0.15+4*\step,-0.15-4*\step)--(0.15+6*\step,-0.15-4*\step)--(0.15+6*\step,-0.15-3*\step)--(0.15+11*\step,-0.15-3*\step)--(0.15+11*\step,-0.15-1*\step)--(0.15+13*\step,-0.15-1*\step)--(0.15+13*\step,-0.15)--(0.15,-0.15);
      \end{tikzpicture}
\end{center}
For every entry of $T$, if it is a \rightangle\ entry then we replace it with an empty cell in the diagram below. 
That is, by Lemma~\ref{lem:cs}, the entries we keep are those given by pairs which appear in consecutive blocks of $\CanonTop(T)$.
\begin{center}
\EWdiagram{\ 1\ \ \ \ 1\ \ \ 111,0\ 111\ \ \ \ \ \ ,\ 0\ \ \ 101110,1\ \ \ \ 0,\ 0\ \ ,0\ 1,\ 0\ }{0,3,4,10,13,15,16}{19,18,17,14,12,11,9,8,7,6,5,2,1}
\end{center}
Apply Theorem~\ref{classif} to find that $D=(T,a) \in \RDEW(F)$ if and only if $a_j\in [0, \nu_j(T))$ for all $j \in [1,19]$.
We must determine $\nu_j(T)$ for all $j \in [1,19]$. 
From the previous tableau we may easily count the number of non-\rightangle\ 0s in every row and number of non-\rightangle\ 1s in each column:
$$(\nu_1(T),\ldots,\nu_{19}(T)) = (1,1,1,3,1,1,1,1,1,1,1,1,1,1,1,1,2,1,1).$$
\end{example}

We can compute the \rightangle\ entries without computing the canonical toppling. To do so we first introduce a result that allows us to compute the entries $S_{jk}$ of a supplementary tableau.

\begin{prop}\label{pro:st}
Let $T \in \EW(F)$ and $S$ be the supplementary tableau to $T$. 
Let $j \in \rowlabs(F)$ and $k \in \collabs(F)$ with $j>k$. 
Then $S_{jk}=0$ if and only if there exists $k' \in \collabs(F)$ with $k'>j$ such that $T_{jk'}=0$ and:
\begin{equation}\label{tomcondition}
\mbox{for all $j'<k$, if $T_{j'k'} = 0$ then $T_{j'k}=0$.}
\end{equation}
Otherwise $S_{jk}=1$.  
\end{prop}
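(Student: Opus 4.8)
The plan is to translate the statement about the supplementary entry $S_{jk}$ into a comparison of the positions of the row $j$ and the column $k$ in the canonical toppling $\CanonTop(T)$, and then to compute those positions recursively from $T$. A row label always topples in some block $\U{T}{m}$ and a column label in some block $\V{T}{\ell}$, and these are always distinct blocks; so, writing $\ell(k)$ and $m(j)$ for the indices determined by $k\in\V{T}{\ell(k)}$ and $j\in\U{T}{m(j)}$, the entry $S_{jk}=0$ (meaning $j$ topples after $k$) is equivalent to $\V{T}{\ell(k)}$ preceding $\U{T}{m(j)}$ in the sequence $(\U{T}{0},\V{T}{1},\U{T}{1},\dots)$. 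Reading off the positions of these two blocks in the alternating sequence, this is exactly the inequality $\ell(k)\le m(j)$.

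First I would record the two recursions satisfied by these block indices, which come from the mechanics of the canonical toppling (Definition~\ref{def:canontop}), made precise by Equation~\eqref{eq:djval} and its row-analogue in Lemma~\ref{minlemma}: a column $c$ becomes unstable precisely once all its ``before'' neighbours have toppled, and these are exactly the rows $r<c$ with $T_{rc}=1$, so
$$\ell(c)=1+\max\{m(r): r\in\rowlabs(F),\ r<c,\ T_{rc}=1\};$$
dually, a non-sink row $j$ topples right after the last of its ``before'' neighbours, the columns $c>j$ with $T_{jc}=0$, so
$$m(j)=\max\{\ell(c): c\in\collabs(F),\ c>j,\ T_{jc}=0\}.$$
Both maxima are over non-empty sets: every column has the sink $0$ as a $1$-neighbour with $m(0)=0$, and every non-top row contains a $0$ by Definition~\ref{def:EWT}. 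Substituting the second recursion, ``$\ell(k)\le m(j)$'' becomes ``there exists a column $k'>j$ with $T_{jk'}=0$ and $\ell(k)\le\ell(k')$'', which already produces the witness $k'$ demanded in the statement.

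It then remains to prove the key comparison: for columns $k<k'$ one has $\ell(k)\le\ell(k')$ if and only if condition~\eqref{tomcondition} holds. For the ``if'' direction I would rephrase \eqref{tomcondition} contrapositively as the inclusion $\{r<k:T_{rk}=1\}\subseteq\{r<k':T_{rk'}=1\}$, so the maximum defining $\ell(k)$ is taken over a subset of the set defining $\ell(k')$, giving $\ell(k)\le\ell(k')$ at once. For the converse I would argue the contrapositive: if \eqref{tomcondition} fails there is a row $j_0<k$ with $T_{j_0k'}=0$ and $T_{j_0k}=1$; then the first recursion gives $\ell(k)\ge 1+m(j_0)$, while the second gives $m(j_0)\ge\ell(k')$ (since $k'>j_0$ and $T_{j_0k'}=0$), whence $\ell(k)\ge 1+\ell(k')>\ell(k')$. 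Combining this equivalence with the reformulation of the previous paragraph yields the proposition, the ``otherwise $S_{jk}=1$'' clause being just the negation.

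The main obstacle I anticipate is not the short combinatorial core above but justifying the two block-index recursions cleanly: one must check that ``appears before'' in $\CanonTop(T)$ is genuinely governed by the neighbours' toppling times as stated, and that the alternating structure of the sequence converts ``the block of $k$ precedes the block of $j$'' into the clean inequality $\ell(k)\le m(j)$ rather than an off-by-one condition. Once these are pinned down, the remaining work is the two-line monotonicity argument together with its strict-inequality counterpart, the strictness (the extra $+1$ in the column recursion) being exactly what rules out equality when \eqref{tomcondition} fails.
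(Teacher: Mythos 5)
Your proof is correct, and it rests on the same two pivots as the paper's: $S_{jk}=0$ means $k$ topples weakly before $j$'s block in $\CanonTop(T)$ (Definition~\ref{def:supptab}), and condition~\eqref{tomcondition} for a column $k'>j>k$ is equivalent to $k$ toppling no later than $k'$ (your $\ell(k)\le\ell(k')$, the paper's ``$k'$ appears after $k$ or in the same block''). The implementation, however, is genuinely different. The paper manipulates the order relation $\topbef$ directly and constructs explicit witnesses: $j'=\min(\U{T}{\ell+t-1})$ to violate \eqref{tomcondition} when $k'$ topples strictly before $k$ (this needs the quietly asserted fact that $j'<k$, which holds because $k$ must have a neighbour in the block that triggers it, together with $j'<k'$ via $k<k'$), and $k'=\max(\V{T}{\ell+t})$ with $j\in\U{T}{\ell+t}$ for the forward implication. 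You instead encode the canonical toppling in the recursions $\ell(c)=1+\max\{m(r):r<c,\ T_{rc}=1\}$ and $m(j)=\max\{\ell(c):c>j,\ T_{jc}=0\}$, after which the witnesses come for free from the maxima and both directions of the key equivalence reduce to monotonicity: the contrapositive inclusion $\{r<k:T_{rk}=1\}\subseteq\{r<k':T_{rk'}=1\}$ one way, the chain $\ell(k)\ge 1+m(j_0)\ge 1+\ell(k')$ the other. What your route buys is that the paper's delicate witness checks are absorbed into non-emptiness of the index sets, which you verify correctly (the sink for columns, property 2 of Definition~\ref{def:EWT} for non-top rows). The cost is that the recursions carry the real content, and you rightly flag them as the main obstacle; they do hold, by exactly the ingredients you cite --- Proposition~\ref{prop:map_ctt} (entries of $T$ encode relative toppling order), Definition~\ref{ttcdefn} (heights of the minimal configuration count 0s and 1s), and Equation~\eqref{eq:djval} of Lemma~\ref{minlemma} with its row analogue (a vertex becomes unstable exactly when its last ``before''-neighbour topples, hence topples in the first block of its side thereafter) --- so your plan has no gap, but a complete write-up must include that verification.
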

Condition \eqref{tomcondition} in Proposition~\ref{pro:st} says that for any entry in column $k'$ containing a $0$, if there is an entry in the same row in column $k$, then it must also be $0$.

\begin{proof}
We begin by showing that a column $k'$ satisfies \eqref{tomcondition} if and only if $k'$ appears after $k$ or in the same block as $k$ in $\CanonTop(T)$.
To show this, suppose that $k'$ appears before $k$, that is, $k' \in \V{T}{\ell}$ and $k \in \V{T}{\ell+t}$ for some $\ell,t>0$. 
Let $j'=\min (\U{T}{\ell+t-1})$. We have $j'<k,k'$ and $k'\topbef j' \topbef k$ in that order. Thus $T_{j'k'}=0$ and $T_{j'k}=1$, which contradicts (\ref{tomcondition}). Conversely, if there exists $j'<k$ such that $T_{j'k'} = 0$ and $T_{j'k}=1$, then by
definition $k'\topbef j' \topbef k$, and in particular $k'$ appears before $k$ in $\CanonTop(T)$.

Now suppose that $S_{jk}=0$. This means that $k \topbef j$, that is, $k \in \V{T}{\ell}$ and $j \in \U{T}{\ell+t}$ for $\ell,t\ge0$. Let $k'=\max(\V{T}{\ell+t})$. 
We have $k'>j$, $T_{jk'}=0$ (since $k'$ appears before $j$ in $\CanonTop(T)$), and $k'$ appears after, or in the same block as, $k$. 
Thus, $k'$ satisfies \eqref{tomcondition}.

Conversely, suppose there exists $k'>j$ such that $T_{jk'}=0$ and $k'$ satisfies~\eqref{tomcondition}.
We have that $k' \topbef j$, and $k'$ appears after, or
in the same block as, $k$. Thus $k \topbef j$, and $S_{jk}=0$, as desired.
\end{proof}

\begin{example}\label{ex:stthm}
Let $T$ be the following EW-tableau:
\begin{center}
\EWdiagram{1111,0100,0110,01}{0,1,2,5}{7,6,4,3}
\end{center}
We can compute $S_{45}$ using Proposition~\ref{pro:st}, so $j=5$ and $k=4$. The only $k'$ with $T_{5k'}=0$ is $k'=7$. We then check that for every $i'$ with $T_{i'7}=0$ we have $T_{i'4}=0$. However, this is not the case for $i'=2$ where $T_{27}=0$ but $T_{24}=1$. Therefore, Proposition~\ref{pro:st} implies $S_{45}=1$.

We can also compute $S_{35}$, so $j=5$, $k=3$ and again the only possible $k'$ is $k'=7$. This time we can see that for every $i'$ with $T_{i'7}=0$ we have $T_{i'3}=0$. Therefore, Proposition~\ref{pro:st} implies $S_{45}=0$.
\end{example}

We now show that if $T_{jk}$ is a \rightangle\ in a EW-tableau $T$, then we can always find a rectangle where at least three of the cells are within $T$.

\begin{lemma}\label{lem:3entry}
If $T_{jk}$ is a \rightangle\ in a EW-tableau $T$, then there exists a cell $T_{j'k'}$ which induces $T_{jk}$ to be a \rightangle, and either
\begin{itemize}
\item $T_{jk}=T_{jk'}=0$ and $T_{j'k'}=1$ are cells in $T$, or
\item $T_{jk}=T_{j'k}=1$ and $T_{j'k'}=0$ are cells in $T$.
\end{itemize}
\begin{proof}
The cell $T_{jk}$ is a \rightangle\ if and only if there exists a row $j'$ and column $k'$ with $j\topbef k'\topbef j'\topbef k$ or $k\topbef j'\topbef k'\topbef j$. So we know that $j\in \U{T}{a}$ and $k\in \V{T}{b}$ where $a<b-1$ or $b<a$. First suppose $b<a$, let $j'=\min(\U{T}{a-1})$ and $k'=\max(\V{T}{a})$. Then we have $k\topbef j'\topbef k'\topbef j$, also $k'>j$, $k'>j'$ and $j<k$, so we have $T_{jk}=T_{jk'}=0$, $T_{j'k'}=1$, and all three are cells in $T$.

Similarly, for $a<b-1$, we let $j'=\min(\U{T}{b-1})$ and $k'=\max(\V{T}{b-1})$. Then we have $j\topbef k'\topbef j'\topbef k$, also $k'>j'$, $j'<k$ and $j<k$, so we have $T_{jk}=T_{j'k}=1$, $T_{j'k'}=0$, and all three are cells in $T$. 
\end{proof}
\end{lemma}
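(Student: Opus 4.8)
The plan is to reduce everything to the toppling order $\topbef$ recorded by $\CanonTop(T)$, and then to verify the index inequalities that turn the relevant rectangle corners into genuine cells of $T$. Recall from Definition~\ref{def:supptab} that $S_{jk}=1$ exactly when $j\topbef k$ and $S_{jk}=0$ exactly when $k\topbef j$, and that $S_{jk}=T_{jk}$ whenever $(j,k)$ is a cell (equivalently $j<k$). First I would unwind Definition~\ref{def:rightangle}: $T_{jk}=x$ is a \rightangle\ iff there are $j'\in\UU$, $k'\in\VV$ with $S_{j'k'}=\bar x$ and $S_{j'k}=S_{jk'}=x$. Translated through $\topbef$, the case $x=0$ reads $k\topbef j'\topbef k'\topbef j$ and the case $x=1$ reads $j\topbef k'\topbef j'\topbef k$. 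Since in $\CanonTop(T)=(\U{T}{0},\V{T}{1},\U{T}{1},\ldots)$ every block $\U{T}{a}$ occupies an even position and every $\V{T}{a}$ an odd position, writing $j\in\U{T}{a}$ and $k\in\V{T}{b}$ and counting positions shows that such $j',k'$ can exist only when $b<a$ (for $x=0$) or $a<b-1$ (for $x=1$); that is, $j$ and $k$ lie in non-adjacent blocks.

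I would then pick explicit inducing cells. When $b<a$, take $j'=\min(\U{T}{a-1})$ and $k'=\max(\V{T}{a})$; a position count using $b\le a-1$ yields $k\topbef j'\topbef k'\topbef j$, so the four $S$-values fall into the pattern certifying that $S_{j'k'}$ induces $T_{jk}$. Symmetrically, when $a<b-1$, take $j'=\min(\U{T}{b-1})$ and $k'=\max(\V{T}{b-1})$, giving $j\topbef k'\topbef j'\topbef k$. In both cases the ordering is immediate; what remains is to certify that the three corners named in the lemma are actual cells of $T$, i.e.\ that each relevant row-label is strictly below its column-label.

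This last point is where I expect the real work, and where the extremal choices of $j'$ and $k'$ are designed to pay off. I would establish two structural facts about the toppling emulation of Definition~\ref{canontopt}:
\begin{itemize}
\item[(A)] every $j\in\U{T}{a}$ satisfies $j<\max(\V{T}{a})$;
\item[(B)] every $k\in\V{T}{a}$ satisfies $\min(\U{T}{a-1})<k$.
\end{itemize}
For (A), a row is recorded in $\U{T}{a}$ exactly when the toppling of the columns of $\V{T}{a}$ turns its last remaining $0$s into $1$s, so row $j$ carries a $0$ in some column $k''\in\V{T}{a}$; as that cell exists, $j<k''\le\max(\V{T}{a})$. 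Dually for (B): a column is recorded in $\V{T}{a}$ exactly when the toppling of the rows of $\U{T}{a-1}$ clears its last remaining $1$s, so column $k$ meets some row $i\in\U{T}{a-1}$, whence $\min(\U{T}{a-1})\le i<k$.

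Finally I would assemble the inequalities. For $x=0$: fact (A) gives $j<k'$, fact (B) applied to $k'\in\V{T}{a}$ gives $j'<k'$, and $j<k$ holds because $T_{jk}$ is a cell, so $T_{jk}=T_{jk'}=0$ and $T_{j'k'}=1$ are cells of $T$. For $x=1$: fact (B) applied to $k\in\V{T}{b}$ gives $j'<k$, fact (A) applied to $j'\in\U{T}{b-1}$ gives $j'<k'$, and again $j<k$, so $T_{jk}=T_{j'k}=1$ and $T_{j'k'}=0$ are cells of $T$, which would complete the proof.
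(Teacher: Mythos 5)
Your proof is correct and follows essentially the same route as the paper's: the same translation of the \rightangle\ condition into the toppling order $\topbef$, the same extremal choices $j'=\min(\U{T}{a-1})$, $k'=\max(\V{T}{a})$ (resp.\ $j'=\min(\U{T}{b-1})$, $k'=\max(\V{T}{b-1})$), and the same two-case analysis. The only difference is that you explicitly justify the cell-existence inequalities through your facts (A) and (B) about the toppling emulation, which the paper asserts without proof; this is a tightening of the same argument rather than a different one.
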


\begin{corollary}\label{cor:cellcs}
The cell $T_{jk}$ is a \rightangle\ in $T$ if and only if there exists a non-attacking cell $T_{j'k'}$ such that:
\begin{itemize}
\item When $T_{jk}=0$, we have $T_{j'k'}=1$, $T_{jk'}=0$ and $S_{j'k}=0$,
\item When $T_{jk}=1$, we have $T_{j'k'}=0$, $T_{j'k}=1$ and $S_{jk'}=1$.
\end{itemize}
\end{corollary}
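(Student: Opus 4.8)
The plan is to read the corollary as a reformulation of Definition~\ref{def:rightangle} in which as many supplementary-tableau entries as possible are replaced by genuine tableau entries. The only tool needed to pass between $S$ and $T$ is the observation, recorded just before Definition~\ref{def:supptab}, that $S=S(T)$ restricted to the shape $F$ coincides with $T$; hence $S_{pq}=T_{pq}$ whenever $(p,q)$ is a cell of $F$. With this in hand I would prove the two implications separately, treating the cases $T_{jk}=0$ and $T_{jk}=1$ symmetrically.

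For the forward implication, suppose $T_{jk}$ is a \rightangle. I would invoke Lemma~\ref{lem:3entry} to obtain an inducing cell $T_{j'k'}$ for which three of the four corner entries already lie inside $F$. In the case $T_{jk}=0$ the lemma yields $T_{jk}=T_{jk'}=0$ and $T_{j'k'}=1$ as cells of $T$; since $T_{j'k'}$ induces $T_{jk}$ to be a \rightangle, Definition~\ref{def:rightangle} with $x=0$ forces $S_{j'k'}=1$, $S_{j'k}=0$ and $S_{jk'}=0$. Extracting $S_{j'k}=0$ and recalling $T_{j'k'}=1$, $T_{jk'}=0$ gives exactly the three conditions required by the corollary. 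The case $T_{jk}=1$ is handled identically using the second alternative of Lemma~\ref{lem:3entry}, this time extracting $S_{jk'}=1$ while keeping $T_{j'k'}=0$ and $T_{j'k}=1$ as in-shape entries.

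For the backward implication I would simply unwind Definition~\ref{def:rightangle}. Assuming the witness $T_{j'k'}$ of the corollary exists, the cells $(j',k')$ and $(j,k')$ (resp.\ $(j',k)$) lie in $F$, so $S_{j'k'}=T_{j'k'}$ and $S_{jk'}=T_{jk'}$ (resp.\ $S_{j'k}=T_{j'k}$). In the case $T_{jk}=0$ this turns the hypotheses $T_{j'k'}=1$, $T_{jk'}=0$, $S_{j'k}=0$ into $S_{j'k'}=1=\mycomp{0}$, $S_{jk'}=0$ and $S_{j'k}=0$, which is precisely a non-attacking inducing entry for $x=0$; thus $T_{jk}$ is a \rightangle. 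The case $T_{jk}=1$ is symmetric.

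The whole content of the corollary lies in the forward direction, and that content is supplied entirely by Lemma~\ref{lem:3entry}: Definition~\ref{def:rightangle} only guarantees an inducing entry somewhere in the supplementary tableau $S$, whose cells may fall outside $F$, whereas the corollary demands that both $T_{j'k'}$ and the side entry $T_{jk'}$ (resp.\ $T_{j'k}$) lie inside $F$. I expect the main obstacle to be purely bookkeeping: keeping straight which of the two colours plays the role of $x$, and verifying that the positions produced by Lemma~\ref{lem:3entry} (the inequalities $k'>j$ and $k'>j'$ in the $T_{jk}=0$ case, and their analogues in the $T_{jk}=1$ case) are genuine cells of $F$, so that the $S$-to-$T$ substitution is legitimate. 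The backward direction needs no new idea beyond $S\vert_F=T$.
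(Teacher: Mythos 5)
Your proof is correct and takes essentially the same route the paper intends: the statement is left as an immediate consequence of Lemma~\ref{lem:3entry} (giving the forward direction, since that lemma's two alternatives match the cases $T_{jk}=0$ and $T_{jk}=1$) together with Definition~\ref{def:rightangle} and the fact that $S(T)$ restricted to $F$ equals $T$ (giving the converse), which is exactly the argument you spell out.
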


Combining Proposition~\ref{pro:st} and Corollary~\ref{cor:cellcs} allows us to compute the \rightangle\ entries in a EW-tableau $T$ without computing the canonical toppling, 
and the corresponding supplementary tableau. So we can find all \rightangle\ entries in 
the following  way:
\begin{itemize}
\item For every $T_{jk}=0$ look at all non-attacking cells $T_{j'k'}=1$, with $T_{jk'}=0$. If $T_{j'k}=0$, or $T_{j'k}=\epsilon$ but $S_{j'k}=0$ by Proposition~\ref{pro:st}, then $T_{jk}$ is a \rightangle.
\item For every $T_{jk}=1$ look at all non-attacking cells $T_{j'k'}=0$, with $T_{j'k}=1$. If $T_{jk'}=1$, or $T_{jk'}=\epsilon$ but $S_{jk'}=1$ by Proposition~\ref{pro:st}, then $T_{jk}$ is a \rightangle.
\end{itemize}

\begin{example}\label{examplefour}
Let $T$ be the EW-tableau below on the left. The tableau on the right is obtained from $T$ by deleting every \rightangle\ entry of $T$.
\begin{center}
\EWdiagram{1111,0000,010,01}{0,1,3,5}{7,6,4,2}
\hskip 50pt
\EWdiagram{1\ 11,\ 0\ \ ,010,01}{0,1,3,5}{7,6,4,2}
\end{center}

We can see that $T_{06}$ is a \rightangle\ induced by $T_{37}$, and that $T_{17}$ and $T_{14}$ are \rightangle s induced by $T_{36}$.

The cell $T_{12}=1$ is also a \rightangle\ induced by $T_{36}$. To see this we need to apply Proposition~\ref{pro:st}
 to $S_{32}$. Let $k'=4$ so that we have $k'>3$ and $T_{3k'}=0$. Then the only $j'<2$ such that $T_{j'k'}=0$ is $j'=1$, and since $T_{12}=0$
 Proposition~\ref{pro:st} implies $S_{32}=0$. 

Therefore, we can see that $(\nu_1(T),\ldots,\nu_{7}(T))=(1,1,2,1,1,2,1)$.
\end{example}

%%%%%%%%%%%%%%%%%%%%%%%%%%%%%%%%%%%%%%%%%%%%%%%%%%%%%%%%%%%%%%%%
%%%%%%%%%%%%%%%%%%%%%%%% SECTION %%%%%%%%%%%%%%%%%%%%%%%%%%%%%%%%%%%%%%%%%%%%%%%%%%%%%%%%%%%%%%%%
%%%%%%%%%%%%%%%%%%%%%%%%%%%%%%%%%%%%%%%%%%%%%%%%%%%%%%%%%%%%%%%%

%\newpage
\section{Decorated permutations}\label{sec:decperms}

\subsection{Representing recurrent configurations as decorated permutations}\label{sec:rec_perms}

We can represent a decorated EW-tableau by a \emph{decorated permutation} using the 
bijection between EW-tableaux and permutations introduced in~\cite{sss} (see 
Sections~\ref{sec:canontop} and \ref{sec:map_ctt} in the present paper). We will show in 
this section how to compute the minimum recurrent configuration corresponding to a EW-tableau as a simple statistic on the corresponding permutation.

\begin{definition}\label{def:decperm}
A \emph{(stable) decorated permutation} is a pair $(\pi,(a_1,\ldots,a_n))$, such that $\pi$ is an $n$-permutation and 
\begin{itemize}
\item $0\le a_i < |\{j\in \A{\pi}{k}\,:\,j>i,\,k\le\ell\}|$ if $i\in \D{\pi}{\ell}$,
\item $0\le a_i < |\{j\in \D{\pi}{k}\,:\,j<i,\,k<\ell\}|$ if $i\in \A{\pi}{\ell}$,
\end{itemize}
where $\D{\pi}{0},\A{\pi}{1},\D{\pi}{1},\A{\pi}{2},\ldots = \RunDec(\pi)$, with the convention $\D{\pi}{0}=\{0\}$ (see end of Section~\ref{sec:canontop}).
\end{definition}
Note that $a_i$ corresponds to the letter $i$ in $\pi$, not to the letter $\pi_i$ in position~$i$. We will be constructing unstable decorated permutations, where the~$a_i$ can be any non-negative integers, but for ease of notation we refer simply to decorated permutations to denote stable decorated permutations unless otherwise stated.  

A letter $\pi_i$ in a permutation $\pi=\pi_1\pi_2\ldots \pi_n$ is an \emph{ascent top} if $i=1$ or $\pi_{i-1}<\pi_i$, and a \emph{descent bottom} if $\pi_{i-1}>\pi_i$. 
Definition~\ref{def:decperm} thus says that if $i$ is an ascent top (resp. descent bottom) then $a_i$ is any non-negative integer strictly smaller than the number of descent bottoms (resp. ascent tops) smaller (resp. greater) than~$i$ that appear to the left of $i$ in $\pi$.  Note that in the run decomposition of $\pi$ the $A_\pi^{(k)}$ consist of ascent tops and the $D_\pi^{(k)}$ of descent bottoms for $k\ge1$.

\begin{lemma}\label{lem:PermToEW}
A pair $(F,(a_1,\ldots,a_n))$ is a decorated EW-tableau if and only if $(\Psi(F),(a_1,\dots, a_n))$ is a decorated permutation.
\begin{proof}
Let $\pi=\Psi(F)$. We know that the ascent tops and descent bottoms of $\pi$ are exactly the labels of columns and rows of $F$, respectively. Suppose $i$ is a row of $F$, and thus a descent bottom of $\pi$. We need to show that the number of $0$s in row $i$ of $F$ is equal to the number of larger ascent tops to the left of $i$ in $\pi$.
Let $Z$ be the set of column labels $j$ such that the cell $(i,j)$ contains a $0$.
Recall from Definition~\ref{canontopt} and the subsequent two paragraphs how
$\pi$ is constructed from $T$.  The letter $i$ cannot appear in $\pi$ until all $0$s in row $i$ have been recorded and changed to $1$s in the process of Definition~\ref{canontopt}.
Therefore, for every $j \in Z$, we know that $j$ must appear to the left of $i$ in $\pi$. Moreover, if $(i,j)$ is a cell of $F$ then $j>i$. 
This completes the case where $i$ is a row, the argument for when $i$ is a column is analogous.
\end{proof}
\end{lemma}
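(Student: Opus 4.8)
The plan is to establish the biconditional by matching the two pairs of bounds in Definition~\ref{decEWT} and Definition~\ref{def:decperm} coordinate by coordinate. Since $\Psi$ is a bijection between $\EW(F)$ and permutations with the prescribed descent-bottom set, the constraint on the sequence $(a_1,\ldots,a_n)$ is what must be reconciled. The decorated-tableau condition bounds $a_i$ by the number of $0$s in row $i$ (when $i$ is a row label) or the number of $1$s in column $i$ (when $i$ is a column label), while the decorated-permutation condition bounds $a_i$ by a count of larger ascent tops to the left (when $i$ is a descent bottom) or smaller descent bottoms to the left (when $i$ is an ascent top). Because the row labels of $F$ are exactly the descent bottoms of $\pi=\Psi(F)$ and the column labels are exactly the ascent tops (as recorded in the paragraph after Definition~\ref{def:decperm}), the two case splits align, and it suffices to prove the two counts agree in each case.

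First I would fix $i$ a row label of $F$, hence a descent bottom of $\pi$, and show that the number of $0$s in row $i$ equals the number of ascent tops $j>i$ appearing to the left of $i$ in $\pi$. This is precisely the content displayed in the lemma statement, and I would carry it out using the construction of $\pi$ from $T$ recalled in Definition~\ref{canontopt} and the two paragraphs following it. The key observation is that, in that construction, the letter $i$ cannot be recorded in $\pi$ until every $0$ in row $i$ has been turned into a $1$; and a $0$ in row $i$ is turned into a $1$ exactly when its column $j$ is recorded. Hence each column $j$ with $T_{ij}=0$ is recorded before $i$, i.e.\ appears to the left of $i$ in $\pi$. Conversely, if a column label $j$ appears to the left of $i$, then its recording (which changes its entire column to $1$s, in particular the cell $(i,j)$ if it exists) must have occurred before $i$ was eligible, forcing $T_{ij}=0$ rather than $1$. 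Letting $Z=\{j\in\collabs(F): T_{ij}=0\}$, this gives a bijection between $Z$ and the set of column labels to the left of $i$ whose column meets row $i$; and since any such column label satisfies $j>i$ (every cell $(i,j)$ of $F$ has $i<j$), these are exactly the ascent tops $j>i$ to the left of $i$. This matches the bound $|\{j\in\A{\pi}{k}:j>i,\,k\le\ell\}|$ for $i\in\D{\pi}{\ell}$.

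The symmetric case, with $i$ a column label and hence an ascent top of $\pi$, is handled by the same argument applied to the transpose/dual: the number of $1$s in column $i$ of $T$ equals the number of descent bottoms $j<i$ to the left of $i$ in $\pi$, matching $|\{j\in\D{\pi}{k}:j<i,\,k<\ell\}|$ for $i\in\A{\pi}{\ell}$. Here a $1$ in cell $(j,i)$ means row $j$ was recorded (turning its row to $0$s) before column $i$ could be recorded as an all-$0$ column, so each such row label $j$ precedes $i$; and again $j<i$ since $(j,i)$ being a cell of $F$ forces the row label below the column label. I would note this case as ``analogous,'' exactly as the lemma's proof does, rather than rewriting it in full.

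The main obstacle, and the step deserving the most care, is pinning down the direction that a column label to the left of $i$ genuinely forces a $0$ in row $i$ (and not merely that $0$s force the column to the left). One must invoke the precise toppling/recording dynamics of Definition~\ref{canontopt}: a column is recorded only when it is all $0$s in the current $T'$, and recording it flips that column to all $1$s, so by the time $i$'s row is all $1$s every column recorded earlier that intersects row $i$ must have contributed a flip $0\mapsto1$ there. Making this ``only when'' reasoning rigorous — i.e.\ that the recording order interleaves rows and columns so that no column is recorded after $i$ yet still sits to $i$'s left in $\pi$ — is where the bookkeeping lives, but it follows cleanly from the alternating structure of $\CanonTop(T)=\RunDec(\Psi(T))$ established in Theorem~\ref{thm:canontop_perm}.
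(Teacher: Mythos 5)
Your proposal is correct and takes essentially the same route as the paper's proof: both arguments reduce to the recording dynamics of Definition~\ref{canontopt}, showing that a $0$ in cell $(i,j)$ forces column $j$ to be recorded before row $i$, with the Ferrers condition that a cell $(i,j)$ exists exactly when $i<j$ identifying these columns with the larger ascent tops to the left of $i$ (and the column case handled symmetrically). If anything, you are slightly more thorough than the paper, since you explicitly prove the converse inclusion --- that a column label appearing to the left of $i$ whose column meets row $i$ must hold a $0$ there --- which the paper's proof leaves implicit.
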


We saw in Example~\ref{exampletwo} that different decorated EW-tableaux can represent the same recurrent configuration. The same is true for decorated permutations. However, introducing the notion of canonical decorated permutations we get a bijection to recurrent configurations, via the set $\RDEW(F)$.

\begin{definition}\label{def:canondecperm}
A \emph{canonical decorated permutation} is a decorated permutation $(\pi,(a_1,\ldots,a_n))$ satisfying $a_i < \mu_i(\pi)$, where
$$
\mu_i(\pi)= 
\begin{cases}
~\big\vert \{ j \in \A{\pi}{\ell}: \, j > i \} \big\vert, \qquad \mbox{if } i \in \D{\pi}{\ell}, \\
~\big\vert \{ j \in \D{\pi}{\ell-1}: \, j < i \} \big\vert, \qquad \mbox{if } i \in \A{\pi}{\ell}. \\
\end{cases}
$$
\end{definition}
The difference between this and Definition~\ref{def:decperm} is that the bound on the 
decoration of a letter $i$ in $\pi$ in a canonical decoration of $\pi$ only depends on the letters in the block immediately preceding the block of  $i$.

\begin{example}
  Given a decorated permutation $(\pi,(a_1,\ldots,a_n))$ we can represent it as $0^{\emptyset}\pi_1^{a_{\pi_1}} \pi_2^{a_{\pi_2}}\ldots\pi_n^{a_{\pi_n}}$. The initial $0$ is to represent the initial block $\D{\pi}{0}=\{0\}$ of $\RunDec(\pi)$.  For example, we write the decorated permutation $(358714962,(2,1,0,0,0,0,0,0,1))$ as $0^{\emptyset}-3^05^08^0-7^01^2-4^09^1-6^02^1$ (where we have inserted dashes between blocks of the run decomposition), which is the maximum canonical decoration of 358714962.  The maximum stable decoration of this permutation is represented as $0^{\emptyset}-3^05^08^0-7^01^2-4^19^2-6^12^4$, since there are two ascent tops (8 and 9) greater than 6 and preceding the block of 6, and five in the case of~2, and also two descent bottoms (0 and 1) smaller than 4 and preceding the block of 4, and three in the case of 9.
\end{example}

By Theorem~\ref{thm:canontop_perm}, Lemma~\ref{munu}, Theorem~\ref{classif} and Lemma~\ref{lem:PermToEW} we get the following corollary.

\begin{corollary}\label{cor:car_RDEW}
For any EW-Tableau $F$, we have that $(F,(a_1,\ldots,a_n))$ is in $\RDEW(F)$ if and only if $(\Psi(F),(a_1,\dots, a_n))$ is a canonical decorated permutation.
\end{corollary}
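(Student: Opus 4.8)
The plan is to assemble Corollary~\ref{cor:car_RDEW} directly from the chain of equivalences already established, so that no new combinatorial argument is required; the work is purely in threading the previously proved statements together correctly. The target is the equivalence
\[
(F,(a_1,\ldots,a_n)) \in \RDEW(F) \iff (\Psi(F),(a_1,\ldots,a_n)) \text{ is a canonical decorated permutation.}
\]
First I would unwind the left-hand side using Theorem~\ref{classif}, which gives $(F,(a_1,\ldots,a_n)) \in \RDEW(F)$ if and only if $a_j < \nu_j(F)$ for all $j \in [1,n]$. Next I would unwind the right-hand side using Definition~\ref{def:canondecperm}: writing $\pi = \Psi(F)$, the pair $(\pi,(a_1,\ldots,a_n))$ is a canonical decorated permutation precisely when $a_j < \mu_j(\pi)$ for all $j$. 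Thus the corollary reduces to the single numerical identity $\nu_j(F) = \mu_j(\pi)$ for every $j \in [1,n]$, together with the fact that $(\pi,(a_1,\ldots,a_n))$ really is a decorated permutation to begin with (so that the canonical condition is meaningfully imposed on the right object).

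The latter point is handled by Lemma~\ref{lem:PermToEW}, which states exactly that $(F,(a_1,\ldots,a_n))$ is a decorated EW-tableau if and only if $(\Psi(F),(a_1,\ldots,a_n))$ is a decorated permutation; since a decorated EW-tableau is the ambient object in $\DEW(F)$ on which Theorem~\ref{classif} operates, this lines up the two notions of decoration correctly. For the numerical identity, I would argue $\nu_j(F) = \mu_j(\pi)$ by composing two facts. Lemma~\ref{munu} gives $\nu_j(F) = \mu_j(\ttc(F))$, where $\mu_j$ of a minimal recurrent configuration is the quantity from Definition~\ref{defn:muj}. Then Theorem~\ref{thm:canontop_perm} identifies $\CanonTop(F) = \RunDec(\Psi(F))$ as sequences of (unordered) blocks, so the block $\U{c}{\ell}$ corresponds to $\A{\pi}{\ell}$ and $\V{c}{\ell}$ to $\D{\pi}{\ell}$ with $c = \ttc(F)$. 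Comparing Definition~\ref{defn:muj} (phrased in terms of the $\U{}{\ell}, \V{}{\ell}$ blocks of $\CanonTop$) with the formula for $\mu_i(\pi)$ in Definition~\ref{def:canondecperm} (phrased in terms of the $\A{\pi}{\ell}, \D{\pi}{\ell}$ blocks of $\RunDec$) shows the two definitions are the same quantity under this block correspondence, yielding $\mu_j(\ttc(F)) = \mu_j(\pi)$.

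The main obstacle I anticipate is not conceptual but bookkeeping: making sure the block indexing and the role swap of ascent tops/descent bottoms versus rows/columns match up exactly. In particular, a descent bottom $i \in \D{\pi}{\ell}$ corresponds to a row label $i \in \U{c}{\ell}$, and Definition~\ref{defn:muj} counts $k \in \V{c}{\ell}$ with $k > j$ while Definition~\ref{def:canondecperm} counts $j \in \A{\pi}{\ell}$ with $j > i$ — these agree because $\V{c}{\ell} \leftrightarrow \A{\pi}{\ell}$ (same index $\ell$). For the other case, an ascent top $i \in \A{\pi}{\ell}$ corresponds to $i \in \V{c}{\ell}$, and both definitions reach back to the immediately preceding block, $\U{c}{\ell-1} \leftrightarrow \D{\pi}{\ell-1}$, counting elements less than $i$. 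I would verify that the ``$\ell-1$'' offsets coincide and that the convention $\D{\pi}{0} = \{0\}$ matching $\U{c}{0} = \{0\}$ causes no off-by-one discrepancy at the boundary. Once these indices are checked to agree termwise, the equality $\nu_j(F) = \mu_j(\pi)$ holds for all $j$, and the corollary follows by substituting into the two unwound conditions. I would close by remarking that this is precisely the combination of Theorem~\ref{thm:canontop_perm}, Lemma~\ref{munu}, Theorem~\ref{classif}, and Lemma~\ref{lem:PermToEW} advertised in the statement.
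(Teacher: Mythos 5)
Your proposal is correct and follows essentially the same route as the paper: the paper's proof of Corollary~\ref{cor:car_RDEW} is exactly the citation of Theorem~\ref{thm:canontop_perm}, Lemma~\ref{munu}, Theorem~\ref{classif} and Lemma~\ref{lem:PermToEW}, threaded together just as you describe (Theorem~\ref{classif} for the left-hand side, Definition~\ref{def:canondecperm} for the right-hand side, Lemma~\ref{lem:PermToEW} to align the ambient decorated objects, and Lemma~\ref{munu} combined with Theorem~\ref{thm:canontop_perm} to get $\nu_j(F)=\mu_j(\Psi(F))$). Your block-correspondence check $\U{c}{\ell}\leftrightarrow\D{\pi}{\ell}$, $\V{c}{\ell}\leftrightarrow\A{\pi}{\ell}$ is precisely the bookkeeping the paper leaves implicit, and it is carried out correctly.
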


The minimal recurrent configuration corresponding to a decorated permutation $(\pi,a)$ is given by $\ttc(\Psi^{-1}(\pi))$, which by a slight abuse of notation we denote $\minrec(\pi)$.  Lemma~\ref{minlemma} gives the following result for decorated permutations:

\begin{lemma}\label{lem:perm_to_minrec}
Given any permutation $\pi$, we can compute $r=\minrec(\pi)$ by:
$$
r_i=\begin{cases}
~|\{j\in \A{\pi}{k}\,:\,j>i, ~k>\ell\}|,&\mbox{ if } i\in \D{\pi}{\ell},\\
~|\{j\in \D{\pi}{k}\,:\,j<i, ~k\ge\ell\}|,&\mbox{ if } i\in \A{\pi}{\ell}.
\end{cases}
$$
\end{lemma}

\subsection{Tracking topplings through the decorated permutations}\label{sec:top_perms}

We can track the stabilization process of a configuration on a Ferrers graph through the corresponding (possibly unstable) decorated permutations.  During this process we move letters between blocks of the run decomposition, and when we move an ascent top into a different ascent block we place it in the unique position which maintains the increasing order of the block, similarly we maintain the decreasing order when we move a descent bottom into a descent block. Note that the blocks moved into may have been empty, and thus are created in the process.
 
Consider a (possibly unstable) decorated permutation $(\pi,(a_1,\dots,a_n))$. Let $r=\minrec(\pi)$ and let $G$ be the associated Ferrers graph. The number of grains on vertex $k$ in $G$ is given by $r_k+a_k$. If $a_k\ge \mu_k(\pi)$ and the letter $k$ is in $\A{\pi}{1}$ or $\D{\pi}{1}$, then $k$ is unstable. If $k$ is not in $\A{\pi}{1}$ or $\D{\pi}{1}$, then we say $k$ is \emph{unsettled}, which means the decorated permutation is not canonical, so we need to move $k$ from $\A{\pi}{i}$ to $\A{\pi}{i-1}$ or from $\D{\pi}{i}$ to $\D{\pi}{i-1}$.

We can track the stabilization of a decorated permutation $(\pi,(a_1,\ldots,a_n))$ with unstable and/or unsettled letters in the following way:
\begin{algorithm}\label{alg:topple}
  \begin{enumerate}
  \item If there are no unsettled letters in $\pi$ go to Step~\ref{step1}, otherwise let $y$ be the leftmost unsettled letter of $\pi$ and proceed to Step~\ref{step7}.\label{step5}
  \item Decrease $a_y$ by $\mu_y(\pi)$. If $y$ is in $\A{\pi}{k}$, resp. $\D{\pi}{k}$, move it to $\A{\pi}{k-1}$, resp. $\D{\pi}{k-1}$. Go to Step~\ref{step5}.\label{step7}
  \item Let $U$ be the set of unstable letters in $\pi$. If $U$ is empty then terminate and return $\pi$.\label{step1}
  \item Let $x$ be the leftmost letter of $U$.\label{step2}
  \item If $x\in \D{\pi}{1}$, then for every $j\in\A{\pi}{1}$ with $j>x$ increase $a_j(\pi)$ by $1$.\label{step3}
  \item Decrease $a_x$ by $\mu_x(\pi)$.
  \item \begin{itemize}
    \item If $x\in \D{\pi}{1}$, then move $x$ into $\D{\pi}{k}$ where $k$ is the smallest value such that $x$ is to the right of all larger ascent tops.
    \item If $x\in \A{\pi}{1}$ then move $x$ into $\A{\pi}{k}$ where $k$ is the smallest value such that $x$ is to the right of all smaller descent bottoms.
    \end{itemize}
  \item If $\D{\pi}{1}=\emptyset$, then merge $\A{\pi}{1}$ and $\A{\pi}{2}$ into a single ascent block.
  \item Delete $x$ from $U$. If $U$ is non-empty go to Step~\ref{step2} otherwise go to Step~\ref{step5}.
  \end{enumerate}
\end{algorithm}

\begin{prop}\label{prop:perm_topp_tracking}
Consider a decorated permutation $\pi$ that is not necessarily stable, which represents a configuration of the sandpile model on the Ferrers graph $G$. When $\pi$ is the input to Algorithm~\ref{alg:topple} the output is the canonical decorated permutation corresponding to the stabilization of $G$. 
\begin{proof}
First we deal with all unsettled vertices so we can determine which vertices are unstable. If a vertex $y$ is unsettled we need to move it to the left, as it topples earlier in the 
canonical toppling. So we move $y$ to the next available block to the left of the same 
type (to a block $\A{\pi}{k}$ if $y$ is an ascent top, $\D{\pi}{k}$ if $y$ is a descent bottom). When we move $y$ we do not change the number of grains on vertex $y$, so $r_y+a_y$ must remain unchanged. However $r_y$ is going to increase by $\mu_y(\pi)$ as the block that was immediately left of $y$ is now to the right. Therefore, by decreasing $a_y$ by $\mu_y(\pi)$ we do not change $r_y+a_y$. This explains Step~\ref{step7}.

Next we deal with the unstable vertices $U$, which we can topple in any order, due to the abelian property of the sandpile model.  When we topple $x\in U$ we move it to the specified position because in the tableau we change all cells in that row/column to $0/1$, respectively. Thus, according to the map from tableaux to permutations $x$ is in the first block in $\pi$ to the right of all smaller rows/larger columns if $x$ is a column/row, respectively.

Before we move $x$ we need to work out which vertices it has added a grain to; these are the larger ascent tops if $x$ is a descent bottom or the smaller descent bottoms if $x$ is an ascent top. To increase the number of grains on $j$ we can increase $r_j$ or $a_j$. If $j$ is 
to the right of $x$ in $\pi$ then, due to the way $r_j$ is defined, the new location of $x$ increases its value by one, but if $j$ is to the left of $x$ in $\pi$ the new location of $x$ does not affect the value of $r_j$ so we increase $a_j$.

After we move $x$ we know $r_x$ has a value of $0$, and the number of grains removed equals the degree of vertex $x$
which is $r_x+\mu_x(\pi)$ because $x$ was in $\A{\pi}{1}$ or $\D{\pi}{1}$, 
hence we need to decrease $a_x$ by the specified amount. So we have now toppled vertex $x$.

We have shown that toppling an unstable letter $x$ gives a decorated permutation corresponding to the graph obtained by toppling the vertex $x$. Therefore, once the algorithm terminates we have a decorated permutation which corresponds to the stabilization of $G$. Furthermore, we have shown that moving unsettled letters according to Step~\ref{step7} results in a decorated permutation corresponding to the same graph, and once all unsettled letters have been moved this must be canonical. Hence the output of the algorithm is the canonical decorated permutation corresponding to the stabilization of~$G$.
\end{proof}
\end{prop}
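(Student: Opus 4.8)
The plan is to prove the proposition by maintaining an invariant throughout the execution of Algorithm~\ref{alg:topple}, and then establishing termination together with the canonicity of the output. The invariant I would track is the physical configuration represented by the current (possibly unstable) decorated permutation $(\pi,(a_1,\ldots,a_n))$: namely the configuration $c$ on $G$ with $c_k=\minrec(\pi)_k+a_k$ for all $k\in[1,n]$, where $\minrec(\pi)$ is computed by Lemma~\ref{lem:perm_to_minrec}. The proposition then reduces to three claims: (i) each application of the unsettled-move step (Step~\ref{step7}) leaves $c$ unchanged; (ii) each pass through the toppling steps (Steps~\ref{step1}--\ref{step3} and the subsequent repositioning of $x$) replaces $c$ by the configuration obtained from $c$ by toppling the single vertex $x$; and (iii) the algorithm terminates in a decorated permutation having no unstable and no unsettled letter, which is exactly the defining condition of a canonical decorated permutation in Definition~\ref{def:canondecperm}.

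For claim~(i) I would compute, directly from the formula for $\minrec$ in Lemma~\ref{lem:perm_to_minrec}, how $\minrec(\pi)$ changes when a single letter $y$ is moved from its block to the adjacent block of the same type. The coordinate $\minrec(\pi)_y$ increases by $\mu_y(\pi)$, as asserted in the discussion preceding the algorithm, and this is cancelled by the prescribed decrease of $a_y$ by $\mu_y(\pi)$. The delicate point, and what I expect to be the main obstacle of the whole argument, is to verify that \emph{no other} coordinate of $c$ changes: because $\minrec$ is a global function of $\pi$, moving $y$ across the intervening block can also alter $\minrec(\pi)_j$ for the neighbors $j$ of $y$ that the letter passes over, and one must check that every such change is exactly balanced by the accompanying modification of the decorations. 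Carrying out this coordinate-by-coordinate accounting via Lemma~\ref{lem:perm_to_minrec} is the technical heart of the proof.

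For claim~(ii) I would show that extracting the leftmost unstable letter $x$ from its first block and reinserting it in the position dictated by the toppling steps mirrors exactly the physical toppling of vertex $x$. Using the tableau description of $\ttc$ (Definition~\ref{ttcdefn}) together with Proposition~\ref{prop:map_ctt}, ``setting the row or column of $x$ to all $0$s or all $1$s'' upon toppling translates precisely into the prescribed block-move; the vertex $x$ then sheds $\deg(x)=\minrec(\pi)_x+\mu_x(\pi)$ grains, which accounts for $\minrec(\pi)_x$ dropping to $0$ at the new position together with the decrease of $a_x$ by $\mu_x(\pi)$. Each neighbor lying to the right of $x$ in $\pi$ gains its grain automatically through the change in its $\minrec$ value, while each neighbor to the left gains its grain through the explicit increment in Step~\ref{step3}. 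The abelian property of the model (Proposition~\ref{pro:DharBurning}) justifies processing the unstable letters in any order.

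Finally, for claim~(iii), termination of the toppling loop follows from the standard fact that a sandpile configuration stabilizes after finitely many topplings (Section~\ref{sec:ASM}), while the unsettled-move loop terminates because each such move decreases by one the index of the block containing the moved letter, and these indices are bounded. When neither step applies, every letter $i$ satisfies $a_i<\mu_i(\pi)$, which is exactly the condition of Definition~\ref{def:canondecperm}. Combining (i)--(iii), the output represents the stabilization of the input configuration and is canonical; hence, by Corollary~\ref{cor:car_RDEW}, it is the canonical decorated permutation corresponding to that stabilization, as required.
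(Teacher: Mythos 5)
Your decomposition is exactly the paper's: you track the configuration $c=\minrec(\pi)+a$ through the algorithm, claim that the unsettled moves (Step~\ref{step7}) preserve $c$, that each toppling pass realizes one physical toppling, and that termination forces canonicity; your claims (ii) and (iii) are argued essentially as in the paper. The problem is claim (i), which you explicitly flag as ``the technical heart'' and then do not carry out. That deferral is fatal, because the coordinate-by-coordinate accounting you postpone does not close for Step~\ref{step7} as written. Indeed, compute with Lemma~\ref{lem:perm_to_minrec}: if $y\in\A{\pi}{\ell}$ is moved to $\A{\pi}{\ell-1}$, then for every $j\in\D{\pi}{\ell-1}$ with $j<y$ --- precisely the $\mu_y(\pi)$ neighbors of $y$ in the block being jumped over --- the value $\minrec(\pi)_j$ decreases by one, since before the move $y$ lies in an ascent block of index greater than $\ell-1$ and afterwards it does not (the symmetric statement holds when $y$ is a descent bottom). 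Step~\ref{step7} changes no decoration other than $a_y$, so $c_j$ drops by one for each such $j$, and your invariant fails whenever $\mu_y(\pi)>0$ at the time of an unsettled move.

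This is not a removable technicality: the claim itself is false for the algorithm as stated. Take $F$ of shape $(3,2,1)$ (rows $0,2,4$, columns $5,3,1$) and the decorated permutation $(13254,(0,0,0,0,1))$, with blocks $\A{\pi}{1}=\{1,3\}$, $\D{\pi}{1}=\{2\}$, $\A{\pi}{2}=\{5\}$, $\D{\pi}{2}=\{4\}$; it represents the stable configuration $(0,1,1,0,2)$, whose canonical representative is $(13542,(0,1,0,0,0))$. The letter $5$ is unsettled ($a_5=1\ge\mu_5(\pi)=1$), so the algorithm sets $a_5=0$ and moves $5$ into $\A{\pi}{1}$; at that instant $\minrec(\pi)_2$ falls from $1$ to $0$ with $a_2$ still $0$, so the grain on vertex $2$ has vanished. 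After the subsequent (trivial) unsettled move of $4$ the algorithm halts at $(13542,(0,0,0,0,0))$, representing $(0,0,1,0,2)\neq(0,1,1,0,2)$. Completing your claim (i) therefore requires amending Step~\ref{step7} to also increase $a_j$ by one for each neighbor $j$ of $y$ in the jumped-over block (the analogue of Step~\ref{step3} in the toppling phase), and proving the invariant for the amended step. You should know that the paper's own proof has the same hole --- it verifies only the coordinate of the moved letter $y$ and asserts the rest --- so you located the genuinely weak point of the argument; but a proposal that leaves its self-identified main obstacle unresolved, when that obstacle is where the argument actually breaks, is not a proof.
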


\begin{example}\label{ex:topp_tracking}

In Figure~\ref{fig:toppling} we show the process of stabilizing a permutation, and the corresponding EW-tableau and configuration on a Ferrers graph, according to Algorithm~\ref{alg:topple}.

On the graph the vertices are written as the number of grains on that vertex, and the label of a vertex is above/below in parentheses. In the tableaux the numbers $a_i$ are given on the south-east boundary. On the permutations the superscript of each letter $i$ is $a_i$, and beneath each letter we have put the pair $\mu_i(\pi),\minrec(\pi)_i$ so we 
can see when a letter is unstable or unsettled. Also, the blocks are separated by dashes, and we are omitting the zero-block at the beginning of the run decomposition.

\begin{figure}[ht]
\centering
\begin{tikzpicture}[scale=0.815]
\def\x{1}\def\y{1}\def\s{.5}
\node (p1) at (0*\x,25*\y){\sdu{6}{3}{1}{1}\hs\sd{7}{3}{1}{0}\hs $\!-\!$\hs\sd{2}{2}{2}{1}\hs $\!-\!$\hs\sd{3}{1}{1}{0}\hs\sd{5}{2}{1}{0}\hs $\!-\!$\hs\sd{4}{0}{1}{0}\hs\sd{1}{0}{2}{1}};
\node (p2) at (0*\x,20*\y){\sd{7}{3}{1}{0}\hs $\!-\!$\hs\sdu{2}{3}{1}{1}\hs $\!-\!$\hs\sd{3}{1}{1}{0}\hs\sd{5}{2}{1}{0}\hs $\!-\!$\hs\sd{4}{1}{1}{0}\hs\sd{1}{1}{2}{1}\hs $\!-\!$\hs\sd{6}{0}{2}{0}};
\node (p3) at (0*\x,15*\y){\hs\sd{3}{2}{1}{0}\hs\sd{5}{3}{1}{0}\hs\sdu{7}{3}{1}{1}\hs $\!-\!$\hs\sd{4}{1}{2}{0}\hs\sd{1}{1}{3}{1}\hs $\!-\!$\hs\sd{6}{1}{2}{0}\hs $\!-\!$\hs\sd{2}{0}{1}{0}};
\node (p4) at (0*\x,10*\y){\hs\sd{3}{2}{1}{0}\hs\sd{5}{3}{1}{0}\hs $\!-\!$\hs\sd{4}{2}{1}{0}\hs\sd{1}{2}{2}{1}\hs $\!-\!$\hs\sd{6}{1}{2}{0}\hs $\!-\!$\hs\sd{2}{1}{1}{0}\hs $\!-\!$\hs\sd{7}{0}{1}{0}};
\node (t1) at (5*\x,25*\y){    
  \begin{tikzpicture}[scale=\s, every node/.style={scale=2*\s}]
		\draw[step=1,thick] (0,0) grid (3,4);
		\draw[step=1,thick] (3,1) grid (4,4);
		\node (0) at (-.25,3.5){\tiny $0$};
		\node (1) at (-.25,2.5){\tiny $1$};
		\node (2) at (-.25,1.5){\tiny $2$};
		\node (3) at (3.5,4.25){\tiny $3$};
		\node (4) at (-.25,0.5){\tiny $4$};
		\node (5) at (2.5,4.25){\tiny $5$};
		\node (6) at (1.5,4.25){\tiny \underline{$6$}};
		\node (7) at (0.5,4.25){\tiny $7$};
		\node (07) at (0.5,3.5){$1$};
		\node (06) at (1.5,3.5){$1$};
		\node (05) at (2.5,3.5){$1$};
		\node (03) at (3.5,3.5){$1$};
		\node (17) at (0.5,2.5){$0$};
		\node (16) at (1.5,2.5){$0$};
		\node (15) at (2.5,2.5){$0$};
		\node (13) at (3.5,2.5){$0$};
		\node (27) at (0.5,1.5){$0$};
		\node (26) at (1.5,1.5){$0$};
		\node (25) at (2.5,1.5){$1$};
		\node (23) at (3.5,1.5){$1$};
		\node (47) at (0.5,0.5){$0$};
		\node (46) at (1.5,0.5){$0$};
		\node (45) at (2.5,0.5){$0$};
		\node (se0) at (4.25,3.5){\tiny $0$};
		\node (se1) at (4.25,2.45){\tiny $1$};
		\node (se2) at (4.25,1.5){\tiny $1$};
		\node (se3) at (3.5,.75){\tiny $0$};
		\node (se4) at (3.25,0.45){\tiny $0$};
		\node (se5) at (2.5,-.25){\tiny $0$};
		\node (se6) at (1.5,-.25){\tiny $1$};
		\node (se7) at (0.5,-.25){\tiny $0$};
		\node (a2) at (6.5,0){};
		\node (a1) at (-2,0){};
              \end{tikzpicture}
};
\node (t2) at (5*\x,20*\y){    
  \begin{tikzpicture}[scale=\s, every node/.style={scale=2*\s}]
		\draw[step=1,thick] (0,0) grid (3,4);
		\draw[step=1,thick] (3,1) grid (4,4);
		\node (0) at (-.25,3.5){\tiny $0$};
		\node (1) at (-.25,2.5){\tiny $1$};
		\node (2) at (-.25,1.45){\tiny \underline{$2$}};
		\node (3) at (3.5,4.25){\tiny $3$};
		\node (4) at (-.25,0.5){\tiny $4$};
		\node (5) at (2.5,4.25){\tiny $5$};
		\node (6) at (1.5,4.25){\tiny $6$};
		\node (7) at (0.5,4.25){\tiny $7$};
		\node (07) at (0.5,3.5){$1$};
		\node (06) at (1.5,3.5){$1$};
		\node (05) at (2.5,3.5){$1$};
		\node (03) at (3.5,3.5){$1$};
		\node (17) at (0.5,2.5){$0$};
		\node (16) at (1.5,2.5){$1$};
		\node (15) at (2.5,2.5){$0$};
		\node (13) at (3.5,2.5){$0$};
		\node (27) at (0.5,1.5){$0$};
		\node (26) at (1.5,1.5){$1$};
		\node (25) at (2.5,1.5){$1$};
		\node (23) at (3.5,1.5){$1$};
		\node (47) at (0.5,0.5){$0$};
		\node (46) at (1.5,0.5){$1$};
		\node (45) at (2.5,0.5){$0$};
		\node (se0) at (4.25,3.5){\tiny $0$};
		\node (se1) at (4.25,2.45){\tiny $1$};
		\node (se2) at (4.25,1.5){\tiny $1$};
		\node (se3) at (3.5,.75){\tiny $0$};
		\node (se4) at (3.25,0.45){\tiny $0$};
		\node (se5) at (2.5,-.25){\tiny $0$};
		\node (se6) at (1.5,-.25){\tiny $0$};
		\node (se7) at (0.5,-.25){\tiny $0$};
		\node (a2) at (6.5,0){};
		\node (a1) at (-2,0){};
              \end{tikzpicture}
};
\node (t3) at (5*\x,15*\y){    
  \begin{tikzpicture}[scale=\s, every node/.style={scale=2*\s}]
		\draw[step=1,thick] (0,0) grid (3,4);
		\draw[step=1,thick] (3,1) grid (4,4);
		\node (0) at (-.25,3.5){\tiny $0$};
		\node (1) at (-.25,2.5){\tiny $1$};
		\node (2) at (-.25,1.5){\tiny $2$};
		\node (3) at (3.5,4.25){\tiny $3$};
		\node (4) at (-.25,0.5){\tiny $4$};
		\node (5) at (2.5,4.25){\tiny $5$};
		\node (6) at (1.5,4.25){\tiny $6$};
		\node (7) at (0.5,4.25){\tiny \underline{$7$}};
		\node (07) at (0.5,3.5){$1$};
		\node (06) at (1.5,3.5){$1$};
		\node (05) at (2.5,3.5){$1$};
		\node (03) at (3.5,3.5){$1$};
		\node (17) at (0.5,2.5){$0$};
		\node (16) at (1.5,2.5){$1$};
		\node (15) at (2.5,2.5){$0$};
		\node (13) at (3.5,2.5){$0$};
		\node (27) at (0.5,1.5){$0$};
		\node (26) at (1.5,1.5){$0$};
		\node (25) at (2.5,1.5){$0$};
		\node (23) at (3.5,1.5){$0$};
		\node (47) at (0.5,0.5){$0$};
		\node (46) at (1.5,0.5){$1$};
		\node (45) at (2.5,0.5){$0$};
		\node (se0) at (4.25,3.5){\tiny $0$};
		\node (se1) at (4.25,2.45){\tiny $1$};
		\node (se2) at (4.25,1.5){\tiny $0$};
		\node (se3) at (3.5,.75){\tiny $0$};
		\node (se4) at (3.25,0.45){\tiny $0$};
		\node (se5) at (2.5,-.25){\tiny $0$};
		\node (se6) at (1.5,-.25){\tiny $0$};
		\node (se7) at (0.5,-.25){\tiny $1$};
		\node (a2) at (6.5,0){};
		\node (a1) at (-2,0){};
              \end{tikzpicture}
};
\node (t4) at (5*\x,10*\y){    
  \begin{tikzpicture}[scale=\s, every node/.style={scale=2*\s}]
		\draw[step=1,thick] (0,0) grid (3,4);
		\draw[step=1,thick] (3,1) grid (4,4);
		\node (0) at (-.25,3.5){\tiny $0$};
		\node (1) at (-.25,2.45){\tiny $1$};
		\node (2) at (-.25,1.5){\tiny $2$};
		\node (3) at (3.5,4.25){\tiny $3$};
		\node (4) at (-.25,0.45){\tiny $4$};
		\node (5) at (2.5,4.25){\tiny $5$};
		\node (6) at (1.5,4.25){\tiny $6$};
		\node (7) at (0.5,4.25){\tiny $7$};
		\node (07) at (0.5,3.5){$1$};
		\node (06) at (1.5,3.5){$1$};
		\node (05) at (2.5,3.5){$1$};
		\node (03) at (3.5,3.5){$1$};
		\node (17) at (0.5,2.5){$1$};
		\node (16) at (1.5,2.5){$1$};
		\node (15) at (2.5,2.5){$0$};
		\node (13) at (3.5,2.5){$0$};
		\node (27) at (0.5,1.5){$1$};
		\node (26) at (1.5,1.5){$0$};
		\node (25) at (2.5,1.5){$0$};
		\node (23) at (3.5,1.5){$0$};
		\node (47) at (0.5,0.5){$1$};
		\node (46) at (1.5,0.5){$1$};
		\node (45) at (2.5,0.5){$0$};
		\node (se0) at (4.25,3.5){\tiny $0$};
		\node (se1) at (4.25,2.45){\tiny $1$};
		\node (se2) at (4.25,1.5){\tiny $0$};
		\node (se3) at (3.5,.75){\tiny $0$};
		\node (se4) at (3.25,0.45){\tiny $0$};
		\node (se5) at (2.5,-.25){\tiny $0$};
		\node (se6) at (1.5,-.25){\tiny $0$};
		\node (se7) at (0.5,-.25){\tiny $0$};
		\node (a2) at (6.5,0){};
		\node (a1) at (-2,0){};
              \end{tikzpicture}
};

\node (g1) at (10*\x,25*\y){	
  \begin{tikzpicture}[scale=\s, every node/.style={scale=2*\s}]
	\node[inner sep=.5mm,label={\tiny (0)}] (0) at (0,2){ $*$};
	\node[inner sep=.5mm,label={\tiny (1)}] (1) at (2,2){$1$};
	\node[inner sep=.5mm,label={\tiny (2)}] (2) at (4,2){$3$};
	\node[inner sep=.5mm,label={\tiny (4)}] (4) at (6,2){$0$};
	\node[inner sep=.5mm,label=below:{\tiny (7)}] (7) at (0,0){$3$};
	\node[inner sep=.5mm,label=below:{\tiny \underline{(6)}}] (6) at (2,0){$4$};
	\node[inner sep=.5mm,label=below:{\tiny (5)}] (5) at (4,0){$2$};
	\node[inner sep=.5mm,label=below:{\tiny (3)}] (3) at (6,0){$1$};
	\draw (0) -- (7) -- (1) -- (6) -- (0) -- (5) -- (1) -- (3) -- (0);
	\draw (4) -- (5) -- (2) -- (7) -- (4) -- (6) -- (2) -- (3);
      \end{tikzpicture}
};
\node (g2) at (10*\x,20*\y){	
  \begin{tikzpicture}[scale=\s, every node/.style={scale=2*\s}]
	\node[inner sep=.5mm,label={\tiny (0)}] (0) at (0,2){ $*$};
	\node[inner sep=.5mm,label={\tiny (1)}] (1) at (2,2){$2$};
	\node[inner sep=.5mm,label={\tiny \underline{(2)}}] (2) at (4,2){$4$};
	\node[inner sep=.5mm,label={\tiny (4)}] (4) at (6,2){$1$};
	\node[inner sep=.5mm,label=below:{\tiny (7)}] (7) at (0,0){$3$};
	\node[inner sep=.5mm,label=below:{\tiny (6)}] (6) at (2,0){$0$};
	\node[inner sep=.5mm,label=below:{\tiny (5)}] (5) at (4,0){$2$};
	\node[inner sep=.5mm,label=below:{\tiny (3)}] (3) at (6,0){$1$};
	\draw (0) -- (7) -- (1) -- (6) -- (0) -- (5) -- (1) -- (3) -- (0);
	\draw (4) -- (5) -- (2) -- (7) -- (4) -- (6) -- (2) -- (3);
      \end{tikzpicture}
};
\node (g3) at (10*\x,15*\y){	
  \begin{tikzpicture}[scale=\s, every node/.style={scale=2*\s}]
	\node[inner sep=.5mm,label={\tiny (0)}] (0) at (0,2){ $*$};
	\node[inner sep=.5mm,label={\tiny (1)}] (1) at (2,2){$2$};
	\node[inner sep=.5mm,label={\tiny (2)}] (2) at (4,2){$0$};
	\node[inner sep=.5mm,label={\tiny (4)}] (4) at (6,2){$1$};
	\node[inner sep=.5mm,label=below:{\tiny \underline{(7)}}] (7) at (0,0){$4$};
	\node[inner sep=.5mm,label=below:{\tiny (6)}] (6) at (2,0){$1$};
	\node[inner sep=.5mm,label=below:{\tiny (5)}] (5) at (4,0){$3$};
	\node[inner sep=.5mm,label=below:{\tiny (3)}] (3) at (6,0){$2$};
	\draw (0) -- (7) -- (1) -- (6) -- (0) -- (5) -- (1) -- (3) -- (0);
	\draw (4) -- (5) -- (2) -- (7) -- (4) -- (6) -- (2) -- (3);
      \end{tikzpicture}
};
\node (g4) at (10*\x,10*\y){	
  \begin{tikzpicture}[scale=\s, every node/.style={scale=2*\s}]
	\node[inner sep=.5mm,label={\tiny (0)}] (0) at (0,2){ $*$};
	\node[inner sep=.5mm,label={\tiny (1)}] (1) at (2,2){$3$};
	\node[inner sep=.5mm,label={\tiny (2)}] (2) at (4,2){$1$};
	\node[inner sep=.5mm,label={\tiny (4)}] (4) at (6,2){$2$};
	\node[inner sep=.5mm,label=below:{\tiny (7)}] (7) at (0,0){$0$};
	\node[inner sep=.5mm,label=below:{\tiny (6)}] (6) at (2,0){$1$};
	\node[inner sep=.5mm,label=below:{\tiny (5)}] (5) at (4,0){$3$};
	\node[inner sep=.5mm,label=below:{\tiny (3)}] (3) at (6,0){$2$};
	\draw (0) -- (7) -- (1) -- (6) -- (0) -- (5) -- (1) -- (3) -- (0);
	\draw (4) -- (5) -- (2) -- (7) -- (4) -- (6) -- (2) -- (3);
      \end{tikzpicture}
};
\draw[->] (p1) -- (p2);\draw[->] (p2) -- (p3);\draw[->] (p3) -- (p4);
\draw[->] (t1) -- (t2);\draw[->] (t2) -- (t3);\draw[->] (t3) -- (t4);
\draw[->] (g1) -- (g2);\draw[->] (g2) -- (g3);\draw[->] (g3) -- (g4);
\end{tikzpicture}
\caption{The toppling of an unstable configuration of the sandpile model on a Ferrers graph, tracked through the corresponding permutations and EW-tableaux and configurations on the graph. Note that the permutations and EW-tableaux have decorations that are unstable except in the final stable configuration. The underlined vertices are those being toppled at each step. The decorations on the vertices are given by superscripts on the permutations and by the  decoration on the south-east edges of the tableaux. The pair of numbers  beneath the letters of the permutations are $\mu_i(\pi),\minrec(\pi)_i$.\label{fig:toppling}}
\end{figure}
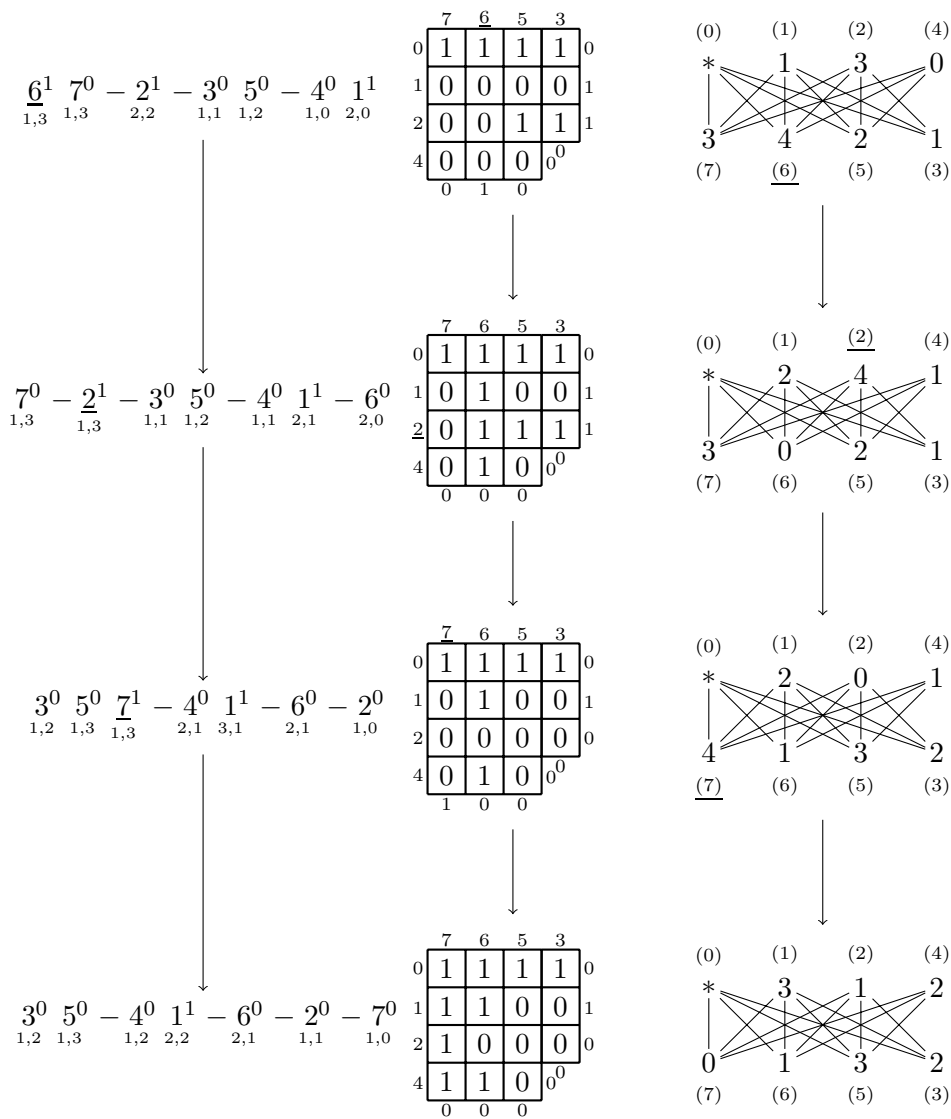

At first we have no unsettled vertices, so we go straight to dealing with the unstable vertices, the only such being $6$. As $6$ is an ascent top we can skip Step~\ref{step3}. We then decrease $a_6$ by $\mu_{6}(\pi)=1$, and then move $6$ to the first ascent block after all smaller descent bottoms, which is a new ascent block at the end of the permutation.

Next we need to deal with all unsettled vertices, but as there are none we go to the unstable vertices, of which the only one is $2$. We increase $a_i$ by one for all larger ascent tops to the left, which is only $7$. Then we decrease $a_2$ by $\mu_2(\pi)=1$ and move $2$ to the first block after all of $3,5$ and $6$. Finally, as $2$ was the last element in its block we must merge the first two ascent blocks, $\{7\}$ and $\{3,5\}$. We have now dealt with all unstable vertices and we repeat the same process again until we stabilize.
\end{example}

\subsection{A bijection between canonical decorated permutations and intransitive trees}
In \cite{postnikov-intransitive-trees}, Postnikov defines what he calls \emph{intransitive trees}, trees labeled with the numbers $1,2,\ldots,n$ such that the label of each vertex is either smaller than the labels of all its neighbors or greater than all of them.  The following describes a bijection from the set of canonical decorated $n$-permutations to the set of intransitive trees on $n+1$ vertices (see Figure~\ref{fig-perm-intrans}):

Let $\pi$ be an $n$-permutation.  The vertices of the corresponding tree $T$ are the letters $0,1,\ldots,n$, with $0$ designated as a root, and the depth of a vertex $v$ in $T$ being the number of edges on the unique path from $v$ to $0$, with the $k$-th \emph{level} of $T$ referring to the set of vertices of depth $k$ in $T$.

The letters of the first non-zero block in the run decomposition of $\pi$ are all children of the root.  More generally, the vertices at depth $k$ in $T$ are precisely the letters of the $k$-th block in $\pi$.  Let $\ell$ be a letter in the $k$-th block of $\pi$, with decoration $d$.  If $k$ is odd (resp. even) then $\ell$ is the child of the $d$-th smallest (resp. largest) letter at level $k-1$ in $T$, counting from $0$, so that the smallest (resp. largest) letter at level $k-1$ is the $0$-th smallest (resp. largest).

It is straightforward to check that the limitations on the decorations of letters of a canonical decorated permutation~$\pi$ guarantee that the above makes each vertex $v$ in $T$ the child of a smaller node if and only if the depth of $v$ is odd, so that $T$ is intransitive.

Conversely, it is straightforward to construct $\pi$ and its decoration from an intransitive tree~$T$.  Namely, the vertices of the $k$-th level of $T$ become the letters of the $k$-th block of $\pi$, ordered increasingly if $k$ is odd, decreasingly if $k$ is even.  The decoration of a letter $\ell$ is the number of siblings of its parent in $T$ that are smaller (resp. greater) than $\ell$ if the depth of $\ell$ in $T$ is odd (resp. even).

It is also straightforward to check that the previous paragraph describes the construction of a permutation whose blocks correspond to the levels of $T$ and whose decoration is legal. Finally, it is easy to see that this bijection maps the canonical toppling of the configuration corresponding to a decorated permutation to the breadth-first search of the corresponding intransitive tree, although the ordering of vertices at the same level is not reflected in the canonical toppling.

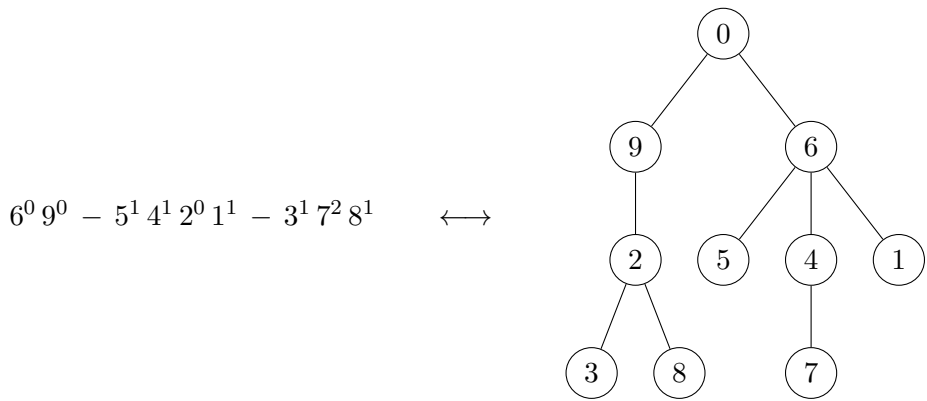
\begin{figure}[ht]
  \centering
\raisebox{6em}{
   $6^0\,9^0\,-\,5^1\,4^1\,2^0\,1^1\,-\,3^1\,7^2\,8^1$
   \hskip 20pt
   $\longleftrightarrow$
   \hskip 20pt
}
\begin{tikzpicture} [ level 1/.style={sibling distance=6em}, level 2/.style={sibling distance=3em}, every node/.style = {shape=circle, draw, align=center, top color=white}]
]
  \node {0}
    child { node {9} 
      child{node{2}
         child{node{3}}
         child{node{8}}}}
    child { node {6}       
       child { node {5} }
       child { node {4} 
          child { node{7}} }
       child { node {1} }
     };

\end{tikzpicture}
\caption{The intransitive tree corresponding to the permutation 695421378, decorated as shown.}\label{fig-perm-intrans}
\end{figure}

In \cite{postnikov-intransitive-trees}, Postnikov presents a bijection between intransitive trees and \emph{local binary search trees} (LBS trees).  The latter are labeled binary trees where each left child has a smaller label than its parent, and each right child a larger label than its parent. LBS trees were first considered by Gessel (private communication to Postnikov, see \cite{postnikov-intransitive-trees}).  It is straightforward to use this bijection to translate canonical decorated permutations, via our bijection described above, to local binary search trees, but this doesn't seem to offer anything more interesting than our bijection to intransitive trees.

\end{document}